\documentclass[12pt,a4paper]{amsart} 
\usepackage{inputenc}
\usepackage{latexsym}
\usepackage{pinlabel}          
\usepackage[a4paper , lmargin = {2cm} , rmargin = {2cm} , tmargin = {2.5cm} , bmargin = {2.5cm} ]{geometry}             
\usepackage{graphicx}
\usepackage{amssymb, amsthm}
\usepackage{epstopdf}
\usepackage{romannum}
\usepackage{tikz}
\usepackage{subfig}
\usepackage[arrow, matrix, curve]{xy} 
\usepackage{hyperref}
\usetikzlibrary{arrows}
\usepackage{enumerate}  
 \usepackage{graphicx}
\usepackage[defaultcolor=red]{changes}
\usepackage{tikz-cd}


\theoremstyle{plain}
\newtheorem*{corollaryA}{Corollary A}
\newtheorem*{theoremB}{Theorem B}
\newtheorem*{corollaryC}{Corollary C}
\newtheorem*{Proposition D}{Proposition D}

\newtheorem{theorem}{Theorem}[section]
\newtheorem{StructureTheorem}[theorem]{Structure Theorem for abelian divisible groups}
\newtheorem{lemma}[theorem]{Lemma}

\newtheorem{corollary}[theorem]{Corollary}

\newtheorem{proposition}[theorem]{Proposition}
\theoremstyle{definition}

\newtheorem{remark}[theorem]{Remark}
\newtheorem{definition}[theorem]{Definition}

\definecolor{applegreen}{rgb}{0.55, 0.71, 0.0}
\definecolor{dgreen}{rgb}{0.0, 0.5, 0.0}

\definechangesauthor[name={}, color=green]{D}

\title[]{Automatic continuity for groups whose torsion subgroups are small}

\author{Daniel Keppeler, Philip M\"oller and Olga Varghese}
\date{\today}

\address{Daniel Keppeler \\
Department of Mathematics\\
University of M\"unster\\ 
Einsteinstra\ss e 62\\
48149 M\"unster (Germany)}
\email{danielkeppeler@uni-muenster.de}

\address{Philip M\"oller\\
Department of Mathematics\\
University of M\"unster\\ 
Einsteinstra\ss e 62\\
48149 M\"unster (Germany)}
\email{philip.moeller@uni-muenster.de}

\address{Olga Varghese\\
Department of Mathematics\\
Otto-von-Guericke University of Magdeburg\\ 
Universit\"atsplatz 2\\
39106 Magdeburg (Germany)}
\email{olga.varghese@ovgu.de}

\begin{document}

	\pagenumbering{arabic}
	\begin{abstract}	
	We prove that a group homomorphism $\varphi\colon L\to G$ from a locally compact Hausdorff group $L$ 
into a discrete group $G$ either is continuous, or there exists a normal open subgroup $N\subseteq L$ such that $\varphi(N)$ is a torsion group provided that $G$ does not include $\mathbb{Q}$ or the $p$-adic integers $\mathbb{Z}_p$ or the Pr\"ufer $p$-group $\mathbb{Z}(p^\infty)$ for any prime $p$ as a subgroup, and if the torsion subgroups of $G$ are small in the sense that any torsion subgroup of $G$ is artinian. In particular, if $\varphi$ is surjective and $G$ additionally does not have non-trivial normal torsion subgroups, then $\varphi$ is continuous. 

As an application we obtain results concerning the continuity of group homomorphisms from locally compact Hausdorff groups to many groups from  geometric group theory, in particular to automorphism groups of right-angled Artin groups and to Helly groups.	
	\bigskip
	
\hspace{-0.4cm}{\bf Key words.} \textit{Automatic continuity, locally compact Hausdorff groups, metrically injective groups, Helly groups, automorphism groups of right-angled Artin groups}	
\medskip

\medskip
\hspace{-0.4cm}{\bf 2010 Mathematics Subject Classification.} Primary: 22D05; Secondary: 20F65	
\end{abstract}

\thanks{The work was funded by the Deutsche Forschungsgemeinschaft (DFG, German Research Foundation) under Germany's Excellence Strategy EXC 2044--390685587, Mathematics M\"unster: Dynamics-Geometry-Structure. The first author was supported by (Polish) Narodowe Centrum Nauki, UMO-2018/31/G/ST1/02681. The second and third author were also partially supported by (Polish) Narodowe Centrum Nauki, UMO-2018/31/G/ST1/02681. The second author was
supported by a stipend of the Studienstiftung des deutschen Volkes. The third author was supported by DFG grant VA 1397/2-1. This work is part of the PhD projects of the first and second author.}

	\maketitle
	
	\section{Introduction}	
In the class of locally compact Hausdorff groups ${\bf LCG}$ one has to distinguish between algebraic
morphisms and continuous morphisms. {\em We will always assume that locally compact  groups have the Hausdorff property.} By ${\rm Hom}(L,G)$ we denote the set
of algebraic morphisms, i.e. group homomorphisms that are not necessarily 
continuous, and by ${\rm cHom}(L,G)$ we denote the subset of continuous group homomorphisms. We are interested in conditions on the discrete group 
$G$ such that ${\rm Hom}({\bf LCG},G)={\rm cHom}({\bf LCG},G)$, i.e every algebraic homomorphism $\varphi\colon L\to G$ is continuous for every locally compact group $L$. From a category theory perspective this is the question whether the forgetful functor from ${\bf LCG}$ to ${\bf Grp}$ is surjective.  

Questions concerning automatic continuity of group homomorphisms from locally compact groups  into discrete groups have been studied for many years. A remarkable result obtained by Dudley in \cite{Dudley} says that any group homomorphism from a locally compact  group into a free (abelian) group is continuous. Further results in this direction can be found in \cite{BogopolskiCorson}, \cite{ConnerCorson}, \cite{CorsonAut} and in \cite{CorsonKazachkov}. A characterization in terms of forbidden subgroups of $G$ was obtained in \cite{CorsonVarghese}: ${\rm Hom}({\bf LCG},G)={\rm cHom}({\bf LCG},G)$ if and only if $G$ is torsion-free and does not contain $\mathbb{Q}$ or the $p$-adic integers  $\mathbb{Z}_p$ for any  prime $p$ as a subgroup. 

By definition, a discrete group $G$ is called $lcH$\textit{-slender} if ${\rm Hom}({\bf LCG},G)={\rm cHom}({\bf LCG},G)$. In geometric group theory it is common to investigate virtual properties of groups, hence we call a group $G$  \textit{virtually lcH-slender} if it has a finite index $lcH$-slender subgroup. Using \cite{CorsonVarghese}, we obtain a characterization of  virtually $lcH$-slender groups. 
\begin{corollaryA}
A group $G$ is virtually $lcH$-slender if and only if $G$ is virtually torsion-free and does not include $\mathbb{Q}$ or the p-adic integers $\mathbb{Z}_p$ for any prime $p$ as a subgroup.
\end{corollaryA}
Many groups from geometric group theory are not $lcH$-slender, but they are virtually $lcH$-slender. Some examples of these groups are Coxeter groups and (outer) automorphism groups of right-angled Artin groups.

\vspace{0.7cm}
The main focus of this article is on automatic continuity for surjective group homomorphisms from locally compact groups into discrete groups. We know that any surjective group homomorphism from a locally compact group into $\mathbb{Z}$ is continuous,
but what happens if we replace the group $\mathbb{Z}$ by a slightly bigger group that contains torsion elements, for example the infinite dihedral group $\mathbb{Z}\rtimes\mathbb{Z}/2\mathbb{Z}$?

Let ${\rm Epi}(L,G)$ be the set of surjective group homomorphisms and  ${\rm cEpi}(L,G)$ the subset consisting of continuous surjective group homomorphisms. The question we address here is the following:\\
\textcolor{black}{\rule{\textwidth}{0.07cm}}	 \begin{quote}    
    		 {\em  Under which conditions on the discrete group $G$ does the equality 
     			$${\rm Epi}({\bf LCG},G)={\rm cEpi}({\bf LCG},G)$$ hold? 
     		}
     		\end{quote}
\textcolor{black}{\rule{\textwidth}{0.07cm}}\vspace{0.2cm}		

It was proven by Morris and Nickolas in \cite{MN} that if $G$ is a non-trivial (finite) free product $\mathbin{*}_{i\in I} G_i$ of groups $G_i$, then ${\rm 
Epi}({\bf LCG},\mathbin{*}_{i\in I} G_i)={\rm cEpi}({\bf LCG},\mathbin{*}_{i\in I}G_i)$. Finite free products of groups are special cases of graph products of groups. 
Given a finite simplicial graph $\Gamma=(V, 
E)$ and a collection of groups $\mathcal{G} = \{ G_u \mid u \in V\}$, the \emph{graph product} $G_\Gamma$ is defined as the quotient
$({\ast}_{u\in V} G_u) / \langle \langle [G_v,G_w]\text{ for }\{v,w\}\in E \rangle \rangle$. Kramer and the third author proved in \cite{KramerVarghese} that if the vertex set of $\Gamma$ is not equal to $S\cup \{w\in V\mid \{v,w\}\in E\text{ for all }v\in S\}$ where the subgraph generated by $S$ is 
complete, then ${\rm Epi}({\bf LCG},G_\Gamma)={\rm cEpi}({\bf LCG},G_\Gamma).$ 
Further, the second and third author proved in \cite{MV20} that if $G$ is a subgroup of a CAT$(0)$ group whose torsion subgroups are finite and $G$ does not have non-trivial finite normal subgroups, then ${\rm Epi}({\bf LCG}, G)={\rm cEpi}({\bf LCG}, G)$ by geometric means.

Our main result is the following.
	\begin{theoremB}
	Let $G$ be a discrete group. If 
			\begin{enumerate}
			\item[(i)] $G$ does not include $\mathbb{Q}$ or the $p$-adic integers $\mathbb{Z}_p$ 	for any prime $p$ as a subgroup,
				\item[(ii)] $G$ does not include the Pr\"ufer $p$-group $\mathbb{Z}(p^\infty)$ for any prime $p$ as a subgroup and
				\item[(iii)]  torsion subgroups in $G$ are artinian,
				\end{enumerate}
			then any group homomorphism $\varphi\colon L\to G$ from a locally compact group $L$ to $G$ is continuous, or there exists a normal open subgroup $N\subseteq L$ such that $\varphi(N)$ is a non-trivial torsion group. 	
			
			If additionally
				\begin{enumerate}				
				\item[(iv)]  $G$ does not have non-trivial torsion normal subgroups, 
			\end{enumerate}			
			then ${\rm Epi}({\bf LCG},G)={\rm cEpi}({\bf LCG},G)$. 
	\end{theoremB}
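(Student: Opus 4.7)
My plan is to establish the dichotomy statement first; the surjective case with (iv) then follows formally by normality. The strategy combines the structure theory of locally compact groups (identity component, van Dantzig's theorem, Yamabe approximation) with the algebraic restrictions (i)--(iii) on $G$ to pin down the structure of $\varphi$ on an almost connected open subgroup of $L$, from which the desired open normal subgroup $N$ can be extracted.

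Let $L_0 \subseteq L$ be the closed normal identity component and let $L^{\circ} \leq L$ denote the preimage under $L \twoheadrightarrow L/L_0$ of a compact open subgroup of the totally disconnected group $L/L_0$, which exists by van Dantzig. Then $L^{\circ}$ is open in $L$ and $L^{\circ}/L_0$ is compact, i.e.\ $L^{\circ}$ is almost connected. The main technical claim is that $\varphi(L^{\circ})$ is a finite torsion subgroup of $G$. For the connected part I would show $\varphi(L_0)=\{1\}$: every continuous one-parameter subgroup $\gamma:(\mathbb{R},+) \to L_0$ yields a composition $\varphi\circ\gamma:\mathbb{R} \to G$ whose image is a divisible torsion-free abelian quotient of $(\mathbb{R},+)$, which by the structure theorem for divisible abelian groups is either trivial or contains $\mathbb{Q}$, and (i) forces it to be trivial. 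Similarly, maximal tori inside compact connected subgroups of $L_0$ are abelian divisible groups whose torsion parts are direct sums of Pr\"ufer groups, and so their images are trivial by (i) and (ii). Since connected Lie groups are algebraically generated by one-parameter subgroups through the exponential map, and every element of a connected locally compact group lies in a compactly generated connected subgroup that is Lie modulo a compact normal subgroup (Yamabe--Gleason--Montgomery--Zippin, with the compact kernel handled by the torus argument), this yields $\varphi(L_0)=\{1\}$.

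Thus $\varphi|_{L^{\circ}}$ factors through $\bar\varphi: L^{\circ}/L_0 \to G$ from the profinite group $L^{\circ}/L_0$. Here I would deploy the automatic continuity result of \cite{CorsonVarghese} in an essential way: for any $x \in L^{\circ}/L_0$ of infinite order, the closed procyclic subgroup $\overline{\langle x \rangle}$ contains a closed copy of $\mathbb{Z}_p$ for some prime $p$, and the abstract quotients of $\mathbb{Z}_p$ that can sit inside a group $G$ satisfying (i)--(iii) are severely restricted; combining (i) with the absence of auxiliary torsion-free ``localized'' subgroups via the finer use of (ii) and (iii), one concludes that $\bar\varphi(\overline{\langle x \rangle})$ is a torsion subgroup of $G$. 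Hence $\varphi(L^{\circ})$ is torsion; by (iii) it is artinian, and by (ii) contains no Pr\"ufer subgroup, so by Shunkov's theorem (every artinian torsion group is Chernikov, and Chernikov without a Pr\"ufer part is finite) $\varphi(L^{\circ})$ is finite. Consequently $\ker\varphi\cap L^{\circ}$ has finite index in $L^{\circ}$, and extracting an $L$-normal open refinement---via the normal core of $\ker\varphi\cap L^{\circ}$, which is a finite intersection of $L$-conjugates because the image $\varphi(L^\circ)$ is finite---produces an open normal subgroup $N \leq L$ with $\varphi(N) \subseteq \varphi(L^{\circ})$ torsion. If $\varphi(N)=\{1\}$ then $\varphi$ is continuous; otherwise $\varphi(N)$ is non-trivial torsion, completing the dichotomy. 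The surjective case under (iv) is then immediate: if $\varphi:L\twoheadrightarrow G$ were not continuous, the dichotomy produces open normal $N$ with $\varphi(N)$ non-trivial torsion; surjectivity of $\varphi$ and normality of $N$ force $\varphi(N)$ to be a non-trivial normal torsion subgroup of $G$, contradicting (iv); hence $\varphi$ must be continuous.

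The main technical obstacles are twofold. First, controlling $\varphi$ on a connected pro-Lie $L_0$ that is not itself a Lie group, since abstract homomorphisms need not factor through Lie quotients and the one-parameter-subgroup argument only generates the Lie approximations; this requires a careful use of the divisibility of connected locally compact groups together with the Yamabe approximation by Lie groups modulo compact normal subgroups. Second, showing $\bar\varphi(\overline{\langle x \rangle})$ is torsion for infinite-order $x$: elements of $\mathbb{Z}_p$ are $n$-divisible for every $n$ coprime to $p$, so an infinite-order image would generate an abelian torsion-free subgroup of $G$ of ``$\mathbb{Z}_{(p)}$-type''---one must use (ii), (iii) alongside (i) to rule this out, ultimately through the characterization in \cite{CorsonVarghese} applied to appropriate subquotients.
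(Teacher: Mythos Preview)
Your proposal has two genuine gaps that the paper's argument avoids.

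\textbf{First, ``artinian torsion $\Rightarrow$ Chernikov'' is false.} Shunkov's theorem (and the Kegel--Wehrfritz version) requires local finiteness as a hypothesis. Tarski monsters are infinite simple $p$-groups in which every proper subgroup has order $p$; they are artinian torsion groups containing no Pr\"ufer subgroup, yet they are infinite. So your step ``$\varphi(L^{\circ})$ is torsion, artinian, no Pr\"ufer, hence finite'' does not go through, and with it the finite-index claim for $\ker\varphi\cap L^{\circ}$ collapses. The paper never attempts to prove finiteness of the image of a compact open subgroup. Instead it uses condition (iii) differently: among the torsion images $\bar\varphi(K)$ of \emph{all} compact open subgroups $K\subseteq L/L_0$, the artinian hypothesis guarantees a minimal one $\bar\varphi(K_0)$, and then a short conjugation argument (intersecting $K_0$ with its conjugates) shows $\bar\varphi(K_0)$ is normal in the image $G$. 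Pulling back $\bar\varphi(K_0)$ gives the open normal $N$ directly.

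\textbf{Second, your normal-core extraction would fail even if $\varphi(L^{\circ})$ were finite.} You assert the normal core of $\ker\varphi\cap L^{\circ}$ in $L$ is a finite intersection ``because $\varphi(L^{\circ})$ is finite'', but conjugation by $g\in L$ gives $g(\ker\varphi\cap L^{\circ})g^{-1}=\ker\varphi\cap gL^{\circ}g^{-1}$, and the number of distinct conjugates $gL^{\circ}g^{-1}$ is governed by $[L:N_L(L^{\circ})]$, not by $|\varphi(L^{\circ})|$. In a non-discrete simple totally disconnected group this core can be trivial. The paper sidesteps this entirely by producing a subgroup that is already normal in $G$ and then taking its preimage.

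A smaller but still real issue: your argument that $\bar\varphi(\overline{\langle x\rangle})$ is torsion is only a sketch. Noting that an infinite-order image would be ``of $\mathbb{Z}_{(p)}$-type'' does not yield a contradiction, since $\mathbb{Z}_{(p)}$ is countable, torsion-free, and contains neither $\mathbb{Q}$ nor $\mathbb{Z}_p$, so (i)--(iii) do not exclude it directly. The paper handles this via cotorsion theory: the image $A=\varphi(\overline{\langle k\rangle})$ of a compact abelian group is cotorsion, $\mathrm{Tor}(A)$ is finite (by (ii), (iii) applied to an abelian artinian torsion group), the short exact sequence $0\to\mathrm{Tor}(A)\to A\to A/\mathrm{Tor}(A)\to 0$ splits, and $A/\mathrm{Tor}(A)$ is simultaneously cotorsion (as a quotient of $A$) and cotorsion-free (torsion-free with no $\mathbb{Q}$ or $\mathbb{Z}_p$ by (i)), hence trivial. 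This is the mechanism by which (i) actually bites, and it is absent from your outline.

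For the connected part, your Yamabe-based approach would work but is heavier than needed: the paper simply invokes Iwasawa's decomposition $L_0=H_1\cdots H_k\cdot K$ with $H_i\cong\mathbb{R}$ and $K$ compact connected, and kills both pieces with a single divisibility argument (compact connected groups are divisible).
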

We want to remark that an open normal subgroup $N$ in a non-discrete locally compact group $L$ is large in the sense that $L/N$ is a discrete group. Thus a group homomorphism $\varphi\colon L\to G$ from a locally compact group $L$ to a discrete group $G$ with properties (i)-(iii) of Theorem B is continuous or the image is almost a torsion group. 		
			
Many geometric groups, i.e. groups that admit a geometric action on a metric space with additional geometry are of our interest. Our main result is inspired by the question if similar automatic continuity results as in the case of CAT$(0)$ groups hold for other geometric groups, in particular for metrically injective groups. We call a group $G$ \textit{metrically 
injective} if it acts geometrically on an injective metric space, i. e.  a metric space that is an injective object in the category of metric spaces and $1$-Lipschitz maps. Examples of metrically injective groups include Gromov-hyperbolic groups \cite{Lang}, uniform lattices in ${\rm GL}_n(\mathbb{R})$ \cite{Haettel} and Helly groups \cite{Helly}, in particular CAT$(0)$ cocompactly cubulated groups, finitely presented graphical $C(4)-T(4)$ small cancellation groups, type-preserving uniform lattices in Euclidean buildings of type $\widetilde{C}_n$ \cite{Helly}, uniform lattices in ${\rm GL}_n(\mathbb{Q}_p)$ \cite{Haettel} and  Artin groups of type FC \cite{HuangOsajda}. 

As an application of Theorem B we obtain an automatic continuity result for group homomorphisms from locally compact groups to metrically injective groups and many other groups from geometric group theory.
	\begin{corollaryC}(see Proposition \ref{classG})
	If $G$ is a subgroup of
\begin{enumerate} 
	\item a virtually $lcH$-slender group,
	\item a cocompactly cubulated ${\rm CAT}(0)$ group,
	\item a ${\rm CAT}(0)$ group whose torsion subgroups are artinian, e.g. a Coxeter group,
	\item a Gromov-hyperbolic group,
	\item a metrically injective group whose torsion subgroups are artinian, e.g. a Helly group whose torsion subgroups are artinian,	
	\item  a finitely generated residually finite group whose torsion subgroups are artinian, e.g. the (outer) automorphism group of a right-angled Artin group,
	\item a one-relator group,
	\item a finitely generated linear group in characteristic $0$,
	\item the Higman group,	
\end{enumerate}
then any group homomorphism $\varphi\colon L\to G$ from a locally compact group $L$ is continuous or there exists a normal open subgroup $N\subseteq L$ such that $\varphi(N)$ is a torsion group. 	

If $G$ does not have non-trivial torsion normal subgroups, then ${\rm Epi}({\bf LCG},G)={\rm cEpi}({\bf LCG},G)$. 
	\end{corollaryC}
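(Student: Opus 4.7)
My plan is to deduce Corollary C directly from Theorem B by a hereditary argument. The three conditions (i), (ii), (iii) of Theorem B all pass to subgroups: not containing $\mathbb{Q}$, $\mathbb{Z}_p$ or $\mathbb{Z}(p^\infty)$ is obviously inherited, and any torsion subgroup of a subgroup $H\le G$ is a torsion subgroup of $G$ itself, so if torsion subgroups of $G$ are artinian then so are those of $H$. Hence it suffices to verify (i)-(iii) for each of the nine ambient classes; this is the content of Proposition \ref{classG}, which I would state and prove next. Granting it, Theorem B immediately yields that every $\varphi\colon L\to G$ is either continuous or admits a normal open subgroup $N\subseteq L$ with $\varphi(N)$ a non-trivial torsion group. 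The additional assumption that $G$ has no non-trivial torsion normal subgroups supplies hypothesis (iv) of Theorem B, giving the desired equality ${\rm Epi}({\bf LCG},G)={\rm cEpi}({\bf LCG},G)$.

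The real work lies in Proposition \ref{classG}, which I would argue class by class. For (1), Corollary A gives virtual torsion-freeness and the exclusion of $\mathbb{Q}$ and $\mathbb{Z}_p$; virtual torsion-freeness forces torsion subgroups to be finite (they embed into the finite quotient), which also rules out $\mathbb{Z}(p^\infty)$. For (2), finite subgroups of a cocompactly cubulated CAT$(0)$ group fix a cube (so torsion subgroups are finite), and the Flat Torus Theorem forces abelian subgroups to be finitely generated, excluding $\mathbb{Q}$, $\mathbb{Z}_p$, and $\mathbb{Z}(p^\infty)$. For (3), the Solvable Subgroup Theorem of Bridson--Haefliger (II.7.5) gives finite generation of abelian subgroups of any CAT$(0)$ group, again excluding $\mathbb{Q}$, $\mathbb{Z}_p$, and the non-finitely-generated $\mathbb{Z}(p^\infty)$; condition (iii) is hypothesized directly. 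For (4), abelian subgroups of Gromov-hyperbolic groups are virtually cyclic and torsion subgroups are finite. For (6), residual finiteness forbids the divisible groups $\mathbb{Q}$ and $\mathbb{Z}(p^\infty)$ (neither has a proper finite-index subgroup), while finite generation, hence countability, excludes the uncountable $\mathbb{Z}_p$; (8) reduces to (6) via Selberg's lemma and Mal'cev's theorem on linear groups. For (7), Magnus' torsion theorem makes torsion subgroups of a one-relator group finite cyclic, and classical commutative-algebra arguments about one-relator groups rule out $\mathbb{Q}$, $\mathbb{Z}_p$, $\mathbb{Z}(p^\infty)$. For (9), the Higman group is torsion-free and countable, with a tree action forcing abelian subgroups to be cyclic.

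I expect case (5), the metrically injective groups, to be the main obstacle. Here I would need a solvable-subgroup theorem for groups acting geometrically on an injective metric space strong enough to rule out $\mathbb{Q}$ and $\mathbb{Z}_p$ as subgroups; the torsion condition (iii) is hypothesized, and together with artinianess it also rules out $\mathbb{Z}(p^\infty)$ (an artinian torsion group containing $\mathbb{Z}(p^\infty)$ still does have to be excluded via the abelian structure, since $\mathbb{Z}(p^\infty)$ is itself artinian). In the CAT$(0)$ and hyperbolic settings the classical flat-torus and thin-triangles machinery suffices, but for general injective metric geometry I would invoke the more recent structural results of Haettel, Huang--Osajda and Chalopin--Chepoi--Genevois--Hirai--Osajda on Helly and injective groups, verifying in particular that torsion-free abelian subgroups are finitely generated. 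Once that is pinned down, Proposition \ref{classG}, and hence Corollary C, is a matter of collecting references and applying Theorem B.
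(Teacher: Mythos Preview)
Your overall strategy is exactly the paper's: verify that each of the nine ambient classes lies in $\mathcal{G}$ (this is Proposition~\ref{classG}), note that $\mathcal{G}$ is subgroup-closed, and apply Theorem~B. Most of your case-by-case arguments also agree with the paper's, sometimes with minor differences of reference (for instance, the paper routes Gromov-hyperbolic groups through the Helly case, and handles one-relator groups by splitting into ``has torsion $\Rightarrow$ hyperbolic'' versus ``torsion-free $\Rightarrow$ Newman's theorem'').

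The substantive divergence is case~(5), metrically injective groups, where you correctly flag a gap but propose the wrong fix. You suggest invoking a finite-generation theorem for (torsion-free) abelian subgroups of metrically injective groups. No such result is cited or used in the paper, and I do not believe it is available in the literature in that generality. The paper instead argues via translation lengths: using the Descombes--Lang barycenter maps on an injective metric space one shows $|\Phi(g)^n|\ge n\,|\Phi(g)|$ for every isometry in the image of a geometric action (Lemma~\ref{bar}), and then a short argument (Proposition~\ref{toolQ}) gives that any \emph{almost divisible} subgroup must be torsion. Since $\mathbb{Q}$ and $\mathbb{Z}_p$ are almost divisible and torsion-free, they are excluded (Lemma~\ref{divisible}). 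This is the key idea you are missing; it replaces the solvable-subgroup/flat-torus machinery entirely and does not require finite generation of abelian subgroups.

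You also correctly notice that $\mathbb{Z}(p^\infty)$ is itself artinian, so hypothesis~(iii) alone cannot exclude it; but you do not say how to rule it out. The paper's answer is Lemma~\ref{torsion}: a metrically injective group has only finitely many conjugacy classes of finite subgroups (finite subgroups fix points, and one applies \cite[Prop.~I.8.5]{BridsonHaefliger}), so the order of torsion elements is bounded and $\mathbb{Z}(p^\infty)$ cannot embed. Once you replace your proposed ``abelian subgroups finitely generated'' route with the translation-length/barycenter argument and add this bounded-torsion observation, your proof of Proposition~\ref{classG}(5), and hence of Corollary~C, goes through as in the paper.
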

	Furthermore we show that we can generate new examples from these old ones by taking extensions and  graph products where the defining graph $\Gamma$ is finite. More precisely, let $\mathcal{G}$ denote the class of groups that do not contain $\mathbb{Q}$ or $\mathbb{Z}_p$ or the Pr\"ufer group $\mathbb{Z}(p^\infty)$ for any prime number $p$ and whose torsion subgroups are artinian. 
	\begin{Proposition D}(see Proposition \ref{Extensions} and Proposition \ref{ClassG})
		The class $\mathcal{G}$ is closed under taking extensions and graph products of groups.
	\end{Proposition D}
\subsection*{Structure of the proof of Theorem B}	
Given a locally compact group $L$, the connected component which contains 
the identity, denoted by $L^ \circ$, is a closed normal subgroup. Thus we 
obtain a short exact sequence of locally compact groups $\left\{1\right\}\to L^\circ\to L\to L/L^\circ\to\left\{1\right\}$, where $L/L^\circ $ is a totally disconnected locally compact group. In this way, the study of group homomorphisms from a general locally compact group reduces to studying group homomorphisms from connected locally compact groups and totally disconnected locally compact groups. The world of connected locally compact groups is well understood. Iwasawa's structure Theorem \cite[Thm. 13]{Iwasawa} tells us that the 
puzzle pieces of these groups are compact  groups and the real numbers. On the other hand, any group with the discrete topology is a totally disconnected locally compact group, hence there are no structure theorems for 
this class of groups, but there is an important result by van Dantzig that says that any totally disconnected locally compact group has an open compact subgroup \cite[III \S 4, No. 6]{Dantzig}. 
 Iwasawa's and van Dantzig's Theorems are the key ingredients in the proof of Theorem B.
 
 Given a group homomorphism $\varphi\colon L\to G$ where $G$ satisfies the conditions (i)-(iii),  in the first step, using Iwasawa's structure Theorem, we prove that  $\varphi(L^\circ)$  is a trivial group. In the second step, using van Dantzig's Theorem and the fact that torsion subgroups in $G$ 
are artinian, we prove that for the induced group homomorphism $\overline{\varphi}\colon L/L^\circ\to \varphi(L)$ there exists a compact open subgroup $K\subseteq L/L^\circ$ which is mapped to a minimal torsion normal subgroup. We then pull the subgroup $\overline{\varphi}(K)$  back and show it is indeed open and also mapped to a torsion normal subgroup under $\varphi$. If condition (iv) is satisfied too,  then the subgroup $\overline{\varphi}(K)$ is trivial. Hence the maps $\overline{\varphi}$ and $\varphi$ have open kernels and therefore are continuous. 

\subsection*{Acknowledgment.} 
The authors thank Linus Kramer for helpful comments on a preprint of this paper and Damian Osajda for pointing out the right attributions.  Additionally the authors thank the anonymous referee for pointing out a mistake in the argument and how to get around it via the strengthening of Corollary 2.5 and the introduction of Lemmas 4.1 and 4.2.

The second and third author are grateful for the opportunity to take part in the Mini-Workshop ``Nonpositively Curved Complexes" in Oberwolfach organized by  Damian Osajda, Piotr Przytycki and Petra Schwer that inspired them to start this project. The third author thanks Samuel Corson for introducing her to the Higman group and for giving arguments that this group satisfies the conditions (i)-(iv) of Theorem B.

The second author thanks the Studienstiftung des deutschen Volkes for the support.

\section{Preliminaries}

\subsection{Locally compact groups}

A {\em locally compact group} is a group endowed with a locally compact Hausdorff topology such that the group operations are continuous. We will always assume that locally compact  groups have the Hausdorff property. The content of this section and much more information can be 
found in various books on topological groups, e. g. \cite{Stroppel}. Examples 
of (connected) locally compact groups are $\mathbb{R}$, the circle group $U(1) = \left\{x\in \mathbb{C} \mid | z |= 1\right\}$ and ${\rm SL}_n(\mathbb{R})$.  Every group endowed with the discrete topology is a totally 
disconnected locally compact group. The $p$-adic integers $\mathbb{Z}_p$ and $\prod_{\mathbb{N}}\mathbb{Z}/2\mathbb{Z}$ are examples of non-discrete totally disconnected locally compact groups. Furthermore, the automorphism group of a locally finite connected graph with the topology of pointwise 
convergence is also such a group. 

There are two results on locally compact groups which we need for the proof of Theorem B.

\begin{theorem}
\label{IwasawaVanDantzig}
Let $L$ be a locally compact group.
\begin{enumerate}
\item (Iwasawa's structure Theorem \cite[Thm. 13]{Iwasawa}) If $L$ is connected, then  $L=H_0\cdot\ldots\cdot H_n\cdot K$ where each $H_i$ is isomorphic 
to $\mathbb{R}$ and $K$ is a compact connected group.
\item  (van Dantzig's Theorem \cite[III \S 4, No. 6]{Dantzig}) If $L$ is totally disconnected, then $L$ has a compact open subgroup.
\end{enumerate}
\end{theorem}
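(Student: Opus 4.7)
The plan is to handle the two parts separately, since they rest on very different machinery: part (2) is essentially a soft argument from general topology plus continuity of multiplication, while part (1) is a deep structural result whose proof ultimately relies on the solution of Hilbert's fifth problem.

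For van Dantzig's theorem, I would first establish that in any locally compact Hausdorff space, the connected component of a point equals its quasi-component (the intersection of all clopen sets containing it); this is a classical consequence of local compactness. Combined with total disconnectedness, it gives that the identity $e \in L$ admits a neighbourhood basis of compact open sets. Fix such a compact open neighbourhood $U$ of $e$. Using continuity of multiplication at $(e,u)$ for each $u \in U$, choose open neighbourhoods $V_u$ of $e$ and $W_u$ of $u$ with $V_u W_u \subseteq U$; by compactness of $U$, finitely many $W_{u_i}$ cover $U$, and one sets $V_0 := \bigcap V_{u_i}$. Shrinking $V_0$ to a symmetric compact open neighbourhood $V$ of $e$ inside $V_0$, one obtains $VU \subseteq U$, hence inductively $V^n U \subseteq U$ and thus $V^n \subseteq U V^{-1} \subseteq UU^{-1}$ for every $n$. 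Then $H := \bigcup_{n \geq 1} V^n$ is an open subgroup contained in the compact set $UU^{-1}$; being open, it is also closed in $L$, and therefore compact.

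For Iwasawa's decomposition, my approach would be to pass via Lie groups by invoking the Gleason--Yamabe approximation theorem: every connected locally compact group $L$ has arbitrarily small compact normal subgroups $N$ such that $L/N$ is a connected Lie group, and $L = \varprojlim L/N$. For a connected Lie group $\overline L$ one applies the Levi--Mal'cev decomposition to split $\overline L$ as a product of its solvable radical and a semisimple factor; the semisimple factor admits the classical Iwasawa $KAN$ decomposition, while a connected solvable Lie group is diffeomorphic to a product of copies of $\mathbb{R}$ and a compact abelian group. Combining these gives a decomposition $\overline L = H_0 \cdots H_n \cdot \overline K$ of the asserted form at each Lie quotient. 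The remaining step is to lift the decomposition through the inverse system: the factor $\overline K$ pulls back to a compact connected subgroup $K$ of $L$ containing each $N$, and the $\mathbb{R}$-factors can be chosen compatibly once one shows that the number $n$ stabilises, which follows from the fact that the rank of a maximal $\mathbb{R}$-split part is a topological invariant that does not increase when quotienting by a compact connected normal subgroup.

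The main obstacle is part (1), and specifically the appeal to Gleason--Yamabe: this is a deep theorem whose proof occupies an entire monograph, and any honest derivation of Iwasawa's structure theorem in the full locally compact setting must go through it. The secondary difficulty is the compatibility step when passing from the connected Lie quotients back to $L$; here I expect one must argue carefully that the compact pieces $N$ sit inside the compact factor $K$ at each stage, so that the $\mathbb{R}$-factors descend to a single decomposition of $L$ rather than merely giving decompositions of all finite-dimensional quotients. Van Dantzig's theorem, by contrast, is essentially a one-page argument once the topological preliminaries about clopen neighbourhoods are in place.
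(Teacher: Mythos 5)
The paper offers no proof of this statement at all: it is a black-box citation of two classical results, with references to Iwasawa's 1949 paper and to Bourbaki for van Dantzig. So there is no ``paper route'' to compare against, and your proposal has to be judged on its own merits. Your argument for part (2) is the standard, correct proof: components equal quasi-components in a locally compact Hausdorff space, so total disconnectedness yields a compact open neighbourhood $U$ of the identity; the joint-continuity-plus-compactness argument produces a symmetric open $V$ with $VU\subseteq U$, and the subgroup $\bigcup_{n\geq 1}V^{n}$ is open, hence closed, and sits inside the compact set $UU^{-1}$ (in fact inside $U$ itself, since $e\in U$). This is exactly the one-page argument behind the Bourbaki citation, and it is complete.

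Part (1) is a reasonable modern outline but it is not a proof, and the gap is concentrated exactly where you say it is. Two points deserve to be flagged. First, the reduction via Gleason--Yamabe is anachronistic relative to the cited source: Iwasawa's Theorem 13 predates the solution of Hilbert's fifth problem and is stated for connected groups that are projective limits of Lie groups; the fact that \emph{every} connected locally compact group is such a limit is Yamabe's later theorem, so the citation as given in the paper silently combines the two. Your outline does the same, which is fine mathematically but should be made explicit. Second, and more seriously, the lifting step is asserted rather than argued. The claim that ``the number $n$ stabilises'' because ``the rank of a maximal $\mathbb{R}$-split part is a topological invariant'' is not a workable justification: the actual argument constructs the one-parameter subgroups $H_i$ directly in $L$ by lifting one-parameter subgroups through the compact normal kernels $N$ (surjectivity of ${\rm Hom}(\mathbb{R},L)\to{\rm Hom}(\mathbb{R},L/N)$ for compact normal $N$), and takes $K$ to be a maximal compact subgroup of $L$, whose image in each Lie quotient is a maximal compact subgroup there. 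Also, your statement that a connected solvable Lie group is ``diffeomorphic to'' a product of lines and a compact abelian group conflates a homeomorphism type with the group-theoretic product decomposition $G=H_1\cdots H_n K$ that the theorem actually requires; the latter needs Iwasawa's Lie-group-level argument, not just the topology. None of this makes your strategy wrong, but as written the hard half of part (1) is an honest acknowledgement of difficulty rather than a proof.
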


\subsection{Torsion and divisible groups}

Recall that a group $G$ is called a \textit{torsion} group if all its elements have finite order and $G$ is called \textit{torsion-free} if every non-trivial element of $G$ is of infinite order.
For a given group $G$ we denote by ${\rm Tor}(G)$ the subset of $G$ consisting of all finite order elements. If $G$ is abelian, the subset ${\rm Tor}(G)$ is a (torsion) subgroup of $G$ and $G/{\rm Tor}(G)$ is a torsion-free group \cite[Thm. 1.2]{Fuchs}.

\begin{lemma}\label{TorsionNormal}
Let $G$ be a group and $N\subseteq G$ a normal torsion-free subgroup. Let $\pi\colon G\to G/N$ be the canonical projection. If $H\subseteq G/N$ is torsion-free, then $\pi^{-1}(H)$ is torsion-free.
\end{lemma}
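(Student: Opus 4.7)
The plan is a short diagram chase using the fact that in any short exact sequence of groups, if the kernel and the quotient are torsion-free, then the middle term is torsion-free. Here the relevant sequence is
\[
1 \longrightarrow N \longrightarrow \pi^{-1}(H) \xrightarrow{\;\pi|_{\pi^{-1}(H)}\;} H \longrightarrow 1,
\]
so this general fact applied to $N$ and $H$ immediately gives the result.

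Concretely, I would argue as follows. Suppose $g \in \pi^{-1}(H)$ satisfies $g^n = 1$ for some $n \geq 1$. Apply $\pi$ to obtain $\pi(g)^n = \pi(g^n) = 1$ in $H$. Since $H$ is torsion-free by assumption, this forces $\pi(g) = 1$, i.e.\ $g \in \ker(\pi) = N$. But $N$ is also torsion-free, and $g \in N$ satisfies $g^n = 1$, so $g = 1$. Hence $\pi^{-1}(H)$ has no non-trivial torsion element.

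There is essentially no obstacle; the only point to watch is making sure one does not confuse $\pi^{-1}(H)$ (which already contains $N$ since $1 \in H$) with $H$ itself, and noting that $N$ being a subgroup of $\pi^{-1}(H)$ is the step that makes the argument work. The assumption that $N$ is \emph{normal} in $G$ is not actually needed for this statement as formulated, but it is a standing hypothesis here because it ensures that $G/N$ is a group at all, so that the quotient map $\pi$ exists.
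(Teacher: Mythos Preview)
Your proof is correct and follows essentially the same argument as the paper: take a torsion element in $\pi^{-1}(H)$, push it to $H$ to see it lies in $N$, then use torsion-freeness of $N$ to conclude it is trivial. The only difference is cosmetic---you frame it via the short exact sequence $1 \to N \to \pi^{-1}(H) \to H \to 1$ and add a remark on the role of normality---but the substance is identical.
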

\begin{proof}
	Suppose there exists an $h\in \pi^{-1}(H)$ with ${\rm ord}(h)<\infty$. Then $\pi(h)$ also has finite order, so $\pi(h)=1\cdot N$. But that means $h\in N$ and since $N$ is torsion-free, we can therefore conclude $h=1$ which means that $\pi^{-1}(H)$ is torsion-free.
\end{proof}

The group of $p$-adic integers $\mathbb{Z}_p$ is a crucial poison group in the characterization of $lcH$-slender groups given in \cite[Thm. 1]{CorsonVarghese}. We recall a definition of this important group: For a prime $p$ we denote the group of \textit{$p$-adic integers} by $\mathbb{Z}_p$, which is defined as the inverse limit of the inverse system $0\longleftarrow \mathbb{Z}/p\mathbb{Z}\longleftarrow\mathbb{Z}/p^2\mathbb{Z}\longleftarrow\dots\longleftarrow\mathbb{Z}/p^n\mathbb{Z}\longleftarrow\dots$ where each of the maps $\mathbb{Z}/p^{n-1}\mathbb{Z}\longleftarrow\mathbb{Z}/p^n\mathbb{Z}$ reduces an integer mod $p^n
$ to its value mod $p^{n-1}$. The group $\mathbb{Z}_p$ is a torsion-free uncountable compact group  \cite[p. 9]{Sullivan}.

For a natural number $n>0$ an element $g\in G$ is called \textit{$n$-divisible} if there exists an element $h$ in $G$ such that $h^n=g$. If every element of $G$ is $n$-divisible, we call $G$ \textit{$n$-divisible}. We call $g\in G$ \textit{divisible} if $g$ is $n$-divisible for all natural numbers $n>0$ and $G$ is said to be \textit{divisible} if every element $g\in G$ is divisible. Examples of divisible groups include $\mathbb{Q}$, $\mathbb{R}$ and compact connected groups \cite[Cor. 2]{Mycielski}, a non-example  is the compact group $\mathbb{Z}_p$, nevertheless it is $q$-divisible for any prime $q\in\mathbb{P}-\left\{p\right\}$ \cite[p. 40]{Fuchs}.

The structure of quotients of $\mathbb{Z}_p$ can be described using the following result. Since we could not find a reference, we give a proof here which is based on \cite{Mathoverflow}.

\begin{lemma}
\label{QuotientZp}
If $H$ is a non-trivial subgroup of $\mathbb{Z}_p$, then $\mathbb{Z}_p/H\cong T\oplus D$, where $T$ is a finite cyclic group of $p$-power order and $D$ is an abelian divisible group.
\end{lemma}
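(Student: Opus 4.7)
The plan is to analyse $H$ through its topological closure in $\mathbb{Z}_p$. Let $v=\min\{v_p(h)\mid h\in H\setminus\{0\}\}$, where $v_p$ denotes the $p$-adic valuation; this minimum exists because $H$ is non-trivial and $\mathbb{N}$ is well-ordered. Clearly $H\subseteq p^v\mathbb{Z}_p$, and picking $h_0\in H$ with $v_p(h_0)=v$, write $h_0=p^v u$ with $u\in\mathbb{Z}_p^\times$. The cyclic subgroup $h_0\mathbb{Z}\subseteq H$ is the image of the dense subgroup $\mathbb{Z}\subseteq\mathbb{Z}_p$ under the homeomorphism of multiplication by $h_0$, so it is dense in $p^v\mathbb{Z}_p$, giving $\overline{H}=p^v\mathbb{Z}_p$.

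Next I would consider the short exact sequence
$$0\longrightarrow p^v\mathbb{Z}_p/H\longrightarrow \mathbb{Z}_p/H\longrightarrow \mathbb{Z}_p/p^v\mathbb{Z}_p\longrightarrow 0.$$
The right-hand term is canonically $\mathbb{Z}/p^v\mathbb{Z}$, a finite cyclic $p$-group, which will serve as $T$. The crucial step is to show that $D:=p^v\mathbb{Z}_p/H$ is divisible. For any prime $q\neq p$, multiplication by $q$ is a bijection on $\mathbb{Z}_p$ because $q\in\mathbb{Z}_p^\times$, hence a surjection on any quotient, so $D$ is automatically $q$-divisible. For $q=p$ I would exploit the density of $H$ in $p^v\mathbb{Z}_p$: given $x\in p^v\mathbb{Z}_p$, choose $h\in H$ with $x-h\in p^{v+1}\mathbb{Z}_p$, so that $x-h=py$ for some $y\in p^v\mathbb{Z}_p$, which witnesses $x+H=p(y+H)$ in $D$.

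To finish, I would invoke the standard fact that a divisible subgroup of any abelian group is a direct summand (divisible abelian groups are injective $\mathbb{Z}$-modules by Baer's criterion, so the inclusion $D\hookrightarrow\mathbb{Z}_p/H$ splits). This yields $\mathbb{Z}_p/H\cong D\oplus \mathbb{Z}/p^v\mathbb{Z}$, the required decomposition. The main subtlety is that $H$ need not be a $\mathbb{Z}_p$-submodule of $\mathbb{Z}_p$ (for instance $\mathbb{Z}\subseteq\mathbb{Z}_p$ is a subgroup but not an ideal), so the identification of $\overline{H}$ and the $p$-divisibility argument must be carried out using only the abelian group structure together with the metric density, rather than any ring-theoretic structure on $H$.
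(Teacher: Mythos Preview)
Your proof is correct and follows the same overall strategy as the paper: identify $D=\overline{H}/H$, show it is divisible (treating the prime $p$ and primes $q\neq p$ separately), and then split it off as a direct summand to obtain $T\cong\mathbb{Z}_p/\overline{H}$. The difference is one of execution. You compute $\overline{H}=p^{v}\mathbb{Z}_p$ explicitly at the outset via the valuation, which immediately gives $T\cong\mathbb{Z}/p^{v}\mathbb{Z}$ and reduces the $p$-divisibility of $D$ to a one-line density argument. The paper instead works with $\overline{H}$ abstractly, using the fact that non-trivial closed subgroups of $\mathbb{Z}_p$ are open to establish $p$-divisibility, and then needs a separate argument (compactness plus automatic continuity of maps to finite groups plus $q$-divisibility of $\mathbb{Z}_p$) to conclude that $T$ is finite cyclic of $p$-power order. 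Your explicit identification of $\overline{H}$ is a modest but genuine streamlining of the same idea.
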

\begin{proof}
First we recall that any non-trivial closed subgroup of  $\mathbb{Z}_p$ is also open \cite[Prop. 7]{RobertsonSchreiber}. Hence, we have the following equivalence:
$$\text{A non-trivial subgroup }B\text{ is open in }\mathbb{Z}_p\text{ iff }B\text{ is closed in } \mathbb{Z}_p.$$  

Now we show that for a non-trivial subgroup $H\subseteq \mathbb{Z}_p$, the quotient $\overline{H}/H$ is an abelian divisible group. Since multiplication by $p$, denoted by $m_p\colon\mathbb{Z}_p\to\mathbb{Z}_p$, $m_p(g)=p\cdot g$ is continuous and $\mathbb{Z}_p$ is compact, the group $p\overline{H}$ is open and furthermore the group  $p\overline{H}+H=\bigcup_{h\in H}(p\overline{H}+h)$ is open and therefore closed and contains $H$. Hence $\overline{H}=p\overline{H}+H$. We obtain
$p\cdot(\overline{H}/H)=(p\cdot\overline{H})/H=(p\overline{H}+H)/H=\overline{H}/H$, thus $\overline{H}/H$ is $p$-divisible. Furthermore, since $\overline{H}$ is closed, it is $q$-divisible for every prime $q\neq p$ \cite[Lemma 6.2.3]{Fuchs}. So $\overline{H}/H$ is $q$-divisible for all primes $q$ and therefore divisible.

It was proven in  \cite[Thm. 4.2.5]{Fuchs} that a divisible subgroup of an abelian group is a direct summand of that group. Thus there exists a group $T$ such that
$$\mathbb{Z}_p/H\cong T\oplus \overline{H}/H.$$

It remains to prove that $T$ is finite cyclic group of $p$-power order. We have
$$T\cong \mathbb{Z}_p/H / \overline{H}/H\cong \mathbb{Z}_p/\overline{H}.$$

Again, the group $\overline{H}$ is an open subgroup and therefore $\bigcup\limits_{g\in \mathbb{Z}_p} g\overline{H}$ is an open cover of  $\mathbb{Z}_p$. Since $\mathbb{Z}_p$ is compact, this cover has to be finite, hence $\mathbb{Z}_p/\overline{H}$ is finite.

Now we consider the group homomorphism $\pi:= \pi_2\circ \pi_1\colon\mathbb{Z}_p\overset{\pi_1}{\rightarrow} \mathbb{Z}_p/H\cong T\oplus D\overset{\pi_2}{\rightarrow} T.$ Since the group $T$ is finite, this group homomorphism is continuous \cite[Thm. 5.9]{Klopsch}. Thus $T$ has to be cyclic, since the group $\mathbb{Z}_p$ has a dense cyclic subgroup. Further, the group $\mathbb{Z}_p$  is $q$-divisible for any prime $q\neq p$, hence $\pi(\mathbb{Z}_p)=T$ is also $q$-divisible for any prime $q\neq p$. Hence $T$ is a cyclic group of $p$-power order. 
\end{proof}

\subsection{Abelian divisible groups}

Some important divisible abelian groups are the Pr\"ufer $p$-groups $\mathbb{Z}(p^\infty)$: Let $p$ be a prime. For each natural number $n$, consider the quotient group $\mathbb{Z}/p^n\mathbb{Z}$ and the embedding $\mathbb{Z}/p^n\mathbb{Z}\to \mathbb{Z}/p^{n+1}\mathbb{Z}$ induced by multiplication by $p$. The direct limit of this system is called the Pr\"ufer $p$-group $\mathbb{Z}(p^\infty)$. 
Each Pr\"ufer $p$-group $\mathbb{Z}(p^\infty)$ is divisible, abelian and every proper subgroup of it is finite and cyclic. Moreover, the complete list of subgroups of a Pr\"ufer $p$-group is given by  $0\subsetneq \mathbb{Z}/p\mathbb{Z}\subsetneq \mathbb{Z}/p^2\mathbb{Z}\subsetneq\dots\subsetneq\mathbb{Z}/p^n\mathbb{Z}\subsetneq\dots\subsetneq \mathbb{Z}(p^\infty)$. For more information regarding these groups we refer to \cite[Chap. 1.3]{Fuchs}).\\

The structure of abelian divisible groups is completely understood, as can be seen by the following theorem, which can be found in \cite[Thm. 4.3.1]{Fuchs}.
\begin{StructureTheorem}
\label{structureTheorem}
An abelian divisible group is the direct sum of groups each of which is
isomorphic either to the additive group $(\mathbb{Q},+)$ of rational numbers or to a Pr\"ufer $p$-group $\mathbb{Z}(p^\infty)$ for a prime $p$.
\end{StructureTheorem}

In particular this can be applied to homomorphisms from  divisible groups (e.g. $\mathbb{R}$) into groups $G$ that do not contain $\mathbb{Q}$ or the Pr\"ufer $p$-group $\mathbb{Z}(p^\infty)$ for any prime $p$:

\begin{corollary}
\label{Rtrivial}
Let $H$ be a  divisible group and $f\colon H\to G$ be a group homomorphism. If $G$ does not include $\mathbb{Q}$ and the Pr\"ufer $p$-group $\mathbb{Z}(p^\infty)$ 
for any prime $p$ as a subgroup, then $f(H)$ is trivial.
\end{corollary}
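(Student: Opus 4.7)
The plan is to show that if $f(H)$ were non-trivial, then $G$ would have to contain $\mathbb{Q}$ or some $\mathbb{Z}(p^\infty)$, contradicting the hypothesis.

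First, I would observe that $f(H)$ inherits divisibility from $H$: given $g=f(h)\in f(H)$ and $n\geq 1$, any $h'\in H$ with $(h')^n=h$ yields $f(h')^n=g$. In the abelian setting the Structure Theorem for abelian divisible groups applies directly to $f(H)$, giving $f(H)\cong\bigoplus_{i\in I}Q_i$ with each $Q_i$ equal to $\mathbb{Q}$ or some $\mathbb{Z}(p^\infty)$. Any non-trivial summand embeds into $f(H)\subseteq G$, contradicting the forbidden-subgroup hypothesis; hence $f(H)=\{1\}$.

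For general (possibly non-abelian) divisible $H$ -- relevant because Iwasawa's theorem will later produce compact connected factors that need not be abelian -- I would instead construct the forbidden subgroup directly inside $f(H)$. Take a non-trivial $g\in f(H)$. If $g$ has infinite order, iteratively choose $g_{n+1}\in f(H)$ with $g_{n+1}^{n+1}=g_n$ starting from $g_1=g$; consecutive $g_n$'s commute because each is a power of the next, so $A=\bigcup_n\langle g_n\rangle$ is abelian and equals $\varinjlim(\mathbb{Z}\xrightarrow{\cdot 2}\mathbb{Z}\xrightarrow{\cdot 3}\mathbb{Z}\cdots)\cong\mathbb{Q}$. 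If $g$ has finite order, first replace $g$ by a suitable power so that $g$ has prime order $p$; then iteratively take $p$-th roots in $f(H)$, and observe that a counting of orders forces each successive root to have order exactly $p$ times the previous (else $g$ would be trivial). The resulting ascending chain of cyclic $p$-groups commutes for the same reason as before, and its union is $\mathbb{Z}(p^\infty)$. Either outcome contradicts the hypothesis on $G$.

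The main obstacle is the non-abelian case, specifically identifying the constructed abelian subgroup with the correct direct limit. Commutativity of consecutive generators is automatic from the power relations, but one must verify that the transition maps are exactly multiplication by $n+1$ (in the infinite-order case) or by $p$ (in the finite-order case), so that the direct limit is really $\mathbb{Q}$ or $\mathbb{Z}(p^\infty)$; this is a short computation. The abelian case, by contrast, is immediate from the Structure Theorem.
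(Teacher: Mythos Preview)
Your argument is correct and shares its core idea with the paper's proof: pick a non-trivial element of the divisible group $f(H)$, iteratively extract roots, and obtain an abelian subgroup that must contain $\mathbb{Q}$ or some $\mathbb{Z}(p^\infty)$. The execution differs slightly. The paper avoids your torsion/torsion-free case split by choosing $g_{n+1}\in f(H)$ with $g_{n+1}^{(n+1)!}=g_n$ rather than $g_{n+1}^{n+1}=g_n$ or successive $p$-th roots; since every positive integer eventually divides $(n+1)!$, the union $\langle g_0,g_1,\dots\rangle$ is then an abelian \emph{divisible} group, and a single appeal to the Structure Theorem for abelian divisible groups produces the forbidden subgroup without identifying any direct limit by hand. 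Your route trades this uniform construction for explicit limit computations in each case; it is a bit longer but has the minor advantage that you never need to verify divisibility of the constructed subgroup.
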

The very clever idea of the following proof was suggested to us by the anonymous referee.
\begin{proof} 
Supposing to the contrary that $f(H)$ is non-trivial we select $g_0\in f(H)-\{1\}$. Of course, $f(H)$ is divisible since $H$ is divisible, so assuming we have selected $g_n\in f(H)$ we select $g_{n+1}\in f(H)$ such that $g_{n+1}^{(n+1)!}=g_n$. Now the subgroup $\langle g_0,g_1,\dots\rangle\subseteq f(H)$ is a nontrivial abelian divisible group, so by Theorem \ref{structureTheorem} this subgroup must include a copy of $\mathbb{Q}$ or some $\mathbb{Z}(p^\infty)$, contradicting the assumption that $G$, and therefore $f(H)$, does not include such subgroups. 

\end{proof}

\section{Virtually lcH-slender groups}

A discrete group $G$ is called \textit{locally compact Hausdorff slender} (abbrev. \textit{lcH-slender}) if  any group homomorphism from a locally compact group to $G$ is continuous. This concept was first introduced by Conner and Corson in \cite{ConnerCorson}.
 
There are some obvious ``poison subgroups" that prevent a group $G$ from being \textit{lcH}-slender. These subgroups are $\mathbb{Q}$, the $p$-adic integers $\mathbb{Z}_p$, the Pr\"ufer $p$-group and torsion groups in general for the following reason:

There exist discontinuous homomorphisms from $\mathbb{R}$ to $\mathbb{Q}$. For example take a Hamelbasis $B$ of $\mathbb{R}=\bigoplus_{b\in B}b\mathbb{Q}$, then the map that maps any linear combination $\sum_{b\in B} bq_b$ with coefficients $q_b\in\mathbb{Q}$ to the sum of the coefficients in $\mathbb{Q}$ is discontinuous regarding the standard topology on $\mathbb{R}$. Similarly, one can construct a discontinuous homomorphism from the compact circle group $\mathbb{R}/\mathbb{Z}$ to the Pr\"ufer $p$-group $\mathbb{Z}(p^\infty)$ for any prime $p$.

If $G$ contains torsion, then $G$ includes $\mathbb{Z}/p\mathbb{Z}$ as a subgroup for some prime $p$. One can construct a discontinuous homomorphism from the compact group $\prod_\mathbb{N} \mathbb{Z}/p\mathbb{Z}$ by extending the homomorphism from $\bigoplus_\mathbb{N} \mathbb{Z}/p\mathbb{Z}$ to $\mathbb{Z}/p\mathbb{Z}$, which just takes the sum of all entries, via a vector space argument.

The $p$-adic integers $\mathbb{Z}_p$ form a non-discrete compact group for any prime $p$. Therefore, if $G$ includes $\mathbb{Z}_p$ for some $p$, then the identity homomorphism from the non-discrete compact $\mathbb{Z}_p$ to the discrete subgroup of $G$ isomorphic to $\mathbb{Z}_p$ is discontinuous.

The algebraic structure of $lcH$-slender groups was characterized by Corson and the third author in \cite[Thm. 1]{CorsonVarghese}. 

\begin{theorem}
\label{SamOlga}
A group $G$ is $lcH$-slender if and only if $G$ is torsion-free and does not include $\mathbb{Q}$ or any $p$-adic integer group $\mathbb{Z}_p$ as a subgroup.
\end{theorem}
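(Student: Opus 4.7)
My plan is to prove the two directions of the biconditional separately, using the algebraic ``poison-subgroup'' constructions outlined in the paragraphs preceding the theorem for necessity, and the Iwasawa--van Dantzig structure theory of locally compact groups for sufficiency.

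For necessity (``only if''), if $G$ contains a non-trivial torsion element then some $\mathbb{Z}/p\mathbb{Z}$ embeds into $G$, and the ``sum of coordinates'' map on $\bigoplus_\mathbb{N}\mathbb{Z}/p\mathbb{Z}$ extends by $\mathbb{F}_p$-linearity to a discontinuous homomorphism $\prod_\mathbb{N}\mathbb{Z}/p\mathbb{Z}\to\mathbb{Z}/p\mathbb{Z}\subseteq G$. If $\mathbb{Q}\subseteq G$, a Hamel basis of $\mathbb{R}$ over $\mathbb{Q}$ yields a discontinuous $\mathbb{R}\to\mathbb{Q}\subseteq G$. If $\mathbb{Z}_p\subseteq G$, the identity map from $\mathbb{Z}_p$ (with its standard compact topology) to $\mathbb{Z}_p\subseteq G$ (discrete) is an algebraic homomorphism that fails to be continuous.

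For sufficiency, suppose $G$ is torsion-free and contains neither $\mathbb{Q}$ nor $\mathbb{Z}_p$, and let $\varphi\colon L\to G$ be any group homomorphism. I would decompose $L$ along the short exact sequence $1\to L^\circ\to L\to L/L^\circ\to 1$. By Iwasawa (Theorem \ref{IwasawaVanDantzig}(1)), $L^\circ=H_0\cdots H_n\cdot K$ with each $H_i\cong\mathbb{R}$ and $K$ compact connected; both types of groups are divisible (the compact connected case by Mycielski), so $\varphi(H_i)$ and $\varphi(K)$ are divisible. Torsion-freeness of $G$ rules out any $\mathbb{Z}(p^\infty)$-subgroup, and $\mathbb{Q}$ is excluded by hypothesis, so Corollary \ref{Rtrivial} forces $\varphi(H_i)=\varphi(K)=\{1\}$, and hence $\varphi(L^\circ)=\{1\}$. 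It remains to show the induced $\overline{\varphi}\colon L/L^\circ\to G$ is continuous; by van Dantzig (Theorem \ref{IwasawaVanDantzig}(2)) the group $L/L^\circ$ has a compact open subgroup $U$, so it suffices to prove $\overline{\varphi}(U)=\{1\}$, since this makes $\ker\overline{\varphi}$ open.

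The principal obstacle is therefore the compact claim: any homomorphism $f\colon C\to G$ from a compact Hausdorff group $C$ to a torsion-free group $G$ containing no $\mathbb{Z}_p$ must be trivial. I would argue element-by-element: for $c\in C$ the closure $\overline{\langle c\rangle}$ is compact and monothetic. If $c$ has finite order, then $f(c)$ is torsion and hence trivial. If $c$ has infinite order, the classification of monothetic compact groups shows that $\overline{\langle c\rangle}$ contains a subgroup abstractly isomorphic to $\mathbb{Z}_p$ for some prime $p$; Lemma \ref{QuotientZp}, combined with Corollary \ref{Rtrivial} and torsion-freeness of $G$, then forces $f$ to vanish on every such subgroup. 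The most delicate step, where the most care will be needed, is upgrading this partial vanishing to $f(c)=1$ itself; here I would exploit the interaction between divisibility and torsion in compact abelian groups via a quotient construction (modding out the torsion and the $\mathbb{Z}_p$-pieces simultaneously) that reduces to one final application of Corollary \ref{Rtrivial}.
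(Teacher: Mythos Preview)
The paper does not prove Theorem~\ref{SamOlga}; it is quoted from \cite{CorsonVarghese} without argument. Your necessity direction reproduces exactly the ``poison subgroup'' discussion the paper gives in the paragraphs preceding the theorem, so that half is aligned with what the paper offers.

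For sufficiency, your Iwasawa--van~Dantzig reduction is the same scaffolding the paper uses for Theorem~B, so the only substantive content is the compact claim. Here your route diverges from the machinery the paper actually develops: Lemma~\ref{CompactTorsion} handles the compact step via cotorsion theory. In the torsion-free setting that argument specialises to a single line: $\overline{\langle c\rangle}$ is compact abelian, hence algebraically compact, hence cotorsion; its image under $f$ is therefore cotorsion, but it is also torsion-free and contains no $\mathbb{Q}$ or $\mathbb{Z}_p$, hence cotorsion-free; a group that is both is trivial. This is cleaner than the monothetic-structure approach and requires no case analysis.

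Your alternative via the structure of procyclic groups is viable in principle, but the ``most delicate step'' you flag is genuinely a gap as written. Knowing that $f$ kills each $\mathbb{Z}_p$-factor and each finite cyclic factor of $\overline{\langle c\rangle}\cong\prod_p A_p$ tells you only that $f$ vanishes on the algebraic direct sum $\bigoplus_p A_p$, and the topological generator $c$ need not lie there when infinitely many factors are nontrivial. What saves the argument is that the quotient $\prod_p A_p\big/\bigoplus_p A_p$ is divisible (each $A_p$ is $q$-divisible for every prime $q\neq p$, so given $n$ one adjusts only the finitely many $p$-components with $p\mid n$), after which Corollary~\ref{Rtrivial} applies to the induced map. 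You gesture at this, but the divisibility of that quotient is the crux and must be stated and verified, not left as ``a quotient construction.''
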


We extend this concept to groups with torsion by studying \textit{virtually lcH-slender groups}, i. e. groups that contain a finite index $lcH$-slender subgroup. 

\begin{corollaryA}
A group $G$ is virtually $lcH$-slender if and only if $G$ is virtually torsion-free and does not include $\mathbb{Q}$ or the p-adic integers $\mathbb{Z}_p$ for any prime $p$ as a subgroup.
\end{corollaryA}

\begin{proof}
Suppose $G$ is virtually torsion-free and does not include $\mathbb{Q}$ or any $p$-adic integer group $\mathbb{Z}_p$ as a subgroup and let $H$ be a finite index torsion-free subgroup of $G$.  Then $H$ does not include $\mathbb{Q}$ or any $p$-adic integer group $\mathbb{Z}_p$ as a subgroup, so $H$ is $lcH$-slender by \cite[Thm. 1]{CorsonVarghese} and therefore $G$ is virtually $lcH$-slender.

Suppose now that $G$ is virtually $lcH$-slender. Then $G$ is virtually torsion-free. Let $H$ be a finite index $lcH$-slender subgroup of $G$. Without loss of the generality we can assume that $H$ is normal in $G$. Suppose that $G$ includes a subgroup $P$ which is either isomorphic to $\mathbb{Q}$ or to the group of $p$-adic integers $\mathbb{Z}_p$. 

Since $H$ has finite index in $G$, $P/(P\cap H)\cong PH/H$ is a finite quotient group of $P$.\\
\textit{Case 1:} $P\cong \mathbb{Q}$. Every quotient of $\mathbb{Q}$ is divisible, thus $\mathbb{Q}$ does not have a finite non-trivial quotient group, so $P/(P\cap H)$ must be trivial and therefore $P\subset H$, which is a contradiction to the assumption, that $H$ is \textit{lcH}-slender by \cite[Thm. 1]{CorsonVarghese}.\\
\textit{Case 2:} $P\cong \mathbb{Z}_p$ for some prime $p$. Since $P/P\cap H$ is finite, $P\cap H$ must be isomorphic to $\mathbb{Z}_p$ \cite[p. 18]{Fuchs}, which is a contradiction to the assumption, that $H$ is $lcH$-slender by Theorem \ref{SamOlga}.
\end{proof}

The class consisting of virtually torsion-free groups includes the following groups:
\begin{itemize}
\item (Outer) automorphism groups of right-angled Artin groups \cite[Thm. 5.2]{CharneyVogtmann}, Lemma \ref{TorsionNormal}, since ${\rm Inn}(A_\Gamma)$ is torsion-free. 
\item Finitely generated linear groups in characteristic $0$ \cite{Selberg}, in particular Coxeter groups \cite[Cor. 6.12.12]{Davis}.
\end{itemize}
	
In Proposition \ref{classG} we will show that all these groups contain neither $\mathbb{Q}$ nor $\mathbb{Z}_p$ for any prime $p$ as a subgroup, hence these groups are virtually $lcH$-slender.	
	
\section{Proof of Theorem B}

We recall some concepts from abelian group theory. An abelian group is \textit{algebraically compact} if it is an algebraic direct summand of a compact Hausdorff abelian group (see \cite[Section 6.1]{Fuchs}). In particular, a compact abelian group is algebraically compact, since it is trivially a direct summand of itself. An abelian group $A$ is \textit{cotorsion} if every short exact sequence of abelian groups,\\
\begin{center}
\begin{tikzcd}
0 \arrow[r] & A \arrow[r, "i"] & B \arrow[r, "\pi"] & C \arrow[r] & 0
\end{tikzcd}
\end{center}

with $C$ torsion-free, splits (that is, there is a homomorphism $\psi \colon C\to B$ such that $\pi\circ\psi$ is identity)\cite[Section 9.6]{Fuchs}. Algebraically compact groups are cotorsion and the homomorphic image of a cotorsion group is cotorsion (see \cite[p. 282]{Fuchs} for both assertions).\\
An abelian group is \textit{cotorsion-free} if $0$ is its only cotorsion subgroup \cite[p. 500]{Fuchs}. An abelian group is cotorsion-free if and only if it is torsion-free and does not include a copy of $\mathbb{Z}_p$ or $\mathbb{Q}$ \cite[Thm. 13.3.8]{Fuchs}.\\
A torsion abelian group $A$ is cotorsion if and only if it is of form $A=B\oplus D$ where $B$ is bounded (that is, there exists some positive natural number $n$ with $nB=0$) and $D$ is divisible \cite[Cor. 9.8.4]{Fuchs}. In particular, a finite abelian torsion group is cotorsion.

Recall, a group $H$ is called \textit{artinian}, if  every non-empty collection of subgroups of $H$ has a minimal element under subset partial order.

\begin{lemma}
\label{AbelianArtinian}
Let $A$ be an abelian group. The group $A$ is artinian if and only if $A$ can be written as a direct sum $A=J\oplus\bigoplus_{i=0}^m \mathbb{Z}(p_i^\infty)$ of a finite abelian group $J$ and finitely many Pr\"ufer $p_i$-groups.
\end{lemma}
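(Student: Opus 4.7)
The plan is to prove both directions separately. The ``if'' direction is the easy one: a finite abelian group $J$ is trivially artinian; each Pr\"ufer $p$-group $\mathbb{Z}(p^\infty)$ is artinian since its full subgroup lattice is the well-ordered chain $0\subsetneq\mathbb{Z}/p\mathbb{Z}\subsetneq\mathbb{Z}/p^2\mathbb{Z}\subsetneq\dots\subsetneq\mathbb{Z}(p^\infty)$; and artinianness of $\mathbb{Z}$-modules is preserved under finite direct sums by the standard short-exact-sequence argument applied to $0\to A_1\to A_1\oplus A_2\to A_2\to 0$.

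For the ``only if'' direction, let $A$ be an artinian abelian group. First I would show that $A$ is a torsion group: if some $a\in A$ had infinite order, then $\langle a\rangle\supsetneq\langle 2a\rangle\supsetneq\langle 4a\rangle\supsetneq\dots$ would strictly descend in $A$, contradicting artinianness (using that subgroups of artinian groups are artinian). Hence $A=\bigoplus_{p}A_p$ decomposes into its $p$-primary components, and only finitely many of them can be non-trivial: otherwise, enumerating primes $p_1,p_2,\dots$ with $A_{p_i}\neq 0$, the chain $\bigoplus_{i\geq k}A_{p_i}$ for $k=1,2,\dots$ descends strictly. It therefore suffices to establish the structure claim for a single artinian abelian $p$-group $A_p$.

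The core of the argument analyses such an $A_p$. Writing $A_p=D_p\oplus R_p$ with $D_p$ the maximal divisible subgroup and $R_p$ reduced, the Structure Theorem for abelian divisible groups gives $D_p\cong\bigoplus_I\mathbb{Z}(p^\infty)$. The index set $I$ must be finite: otherwise the socle $D_p[p]=\{x\in D_p\mid px=0\}$ would be an infinite-dimensional $\mathbb{F}_p$-vector space, and choosing a basis $(e_j)_{j\in\mathbb{N}}$ yields a strictly descending chain $\langle e_1,e_2,\dots\rangle\supsetneq\langle e_2,e_3,\dots\rangle\supsetneq\dots$. For the reduced summand, the descending chain $R_p\supseteq pR_p\supseteq p^2R_p\supseteq\dots$ must stabilize at some $n$, so $S:=p^nR_p$ satisfies $pS=S$; as $S$ is a $p$-group this makes $S$ divisible, and since $R_p$ is reduced $S=0$. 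Hence $R_p$ is bounded, and by Pr\"ufer's theorem on bounded abelian groups (see Fuchs) it is a direct sum of finite cyclic $p$-groups; artinianness of $R_p[p]$ (an $\mathbb{F}_p$-vector space) forces the number of summands to be finite, so $R_p$ itself is finite. Collecting the finite reduced parts into a single finite abelian group $J$ across the finitely many relevant primes produces the claimed decomposition.

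The main subtle step is the reduced-part analysis: it chains together three ingredients (stabilization of $p^nR_p$ by artinianness, the reducedness of $R_p$ to rule out a nonzero divisible tail, and Pr\"ufer's theorem plus a socle argument to pass from boundedness to finiteness). Each is individually routine, but the argument fails if any one of them is skipped.
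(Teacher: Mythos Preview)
Your proof is correct, but it takes a different route from the paper. The paper's proof is a two-line citation: it invokes \cite[Thm.~4.5.3]{Fuchs}, which characterizes artinian abelian groups as exactly the finite direct sums of \emph{cocyclic} groups, together with \cite[Thm.~1.3.3]{Fuchs}, which identifies the cocyclic abelian groups as precisely the groups $\mathbb{Z}/p^k\mathbb{Z}$ and $\mathbb{Z}(p^\infty)$. Combining these gives the result immediately.

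Your argument, by contrast, is essentially a self-contained proof of (a form of) Fuchs's Theorem~4.5.3 from first principles: torsion, primary decomposition, the divisible/reduced splitting, the structure theorem for divisible groups, and Pr\"ufer's theorem on bounded abelian groups. Each step is correct, including the point that a $p$-divisible abelian $p$-group is fully divisible (multiplication by any $q$ coprime to $p$ is already bijective on such a group), which justifies concluding $S=p^nR_p$ is divisible and hence zero. The trade-off is clear: the paper's approach is much shorter but relies on heavier black boxes from Fuchs, while yours is longer but more transparent and uses only tools already stated or referenced in the paper (the structure theorem for divisible groups is Theorem~\ref{structureTheorem}, and Pr\"ufer's theorem is standard). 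Either is acceptable; yours has the advantage that a reader can follow it without looking up the cocyclic classification.
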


\begin{proof}
By \cite[Thm. 4.5.3]{Fuchs} $A$ is artinian if and only if $A$ is a direct sum of a finite number of so called cocyclic groups, which are each isomorphic to either $\mathbb{Z}/p^k$ or $\mathbb{Z}(p^\infty)$ for some prime $p$ and some positive integer $k$ by \cite[Thm. 1.3.3]{Fuchs}.
\end{proof}

For the proof of Theorem B we need one more additional lemma. 
\begin{lemma}
\label{CompactTorsion}
Let $\varphi\colon K\to G$ be a group homomorphism from a compact group $K$ into a discrete group $G$.  Assume that $G$ does not include $\mathbb{Q}$, or the $p$-adic integers $\mathbb{Z}_p$ for any prime $p$,  or the Pr\"ufer $p$-group $\mathbb{Z}(p^\infty)$ for any prime $p$ as a subgroup. Also suppose that abelian torsion subgroups of $G$ are artinian. Then $\varphi(K)$ is a torsion group.
\end{lemma}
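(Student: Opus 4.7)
The plan is to fix an arbitrary $k \in K$ and show that $\varphi(k)$ has finite order; since $k$ is arbitrary, this will give $\varphi(K) \subseteq \mathrm{Tor}(G)$. I would restrict $\varphi$ to $H := \overline{\langle k \rangle}$, the closure of the cyclic subgroup generated by $k$. As the closure of an abelian subgroup in a topological group, $H$ is abelian, and as a closed subgroup of $K$ it is compact, so $H$ is a compact abelian group. By the preliminaries $H$ is then algebraically compact and therefore cotorsion, and as a homomorphic image of a cotorsion group, $\varphi(H)$ is itself a cotorsion abelian subgroup of $G$.

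Next I would isolate the torsion part: set $T := \mathrm{Tor}(\varphi(H))$. As an abelian torsion subgroup of $G$ it is artinian by hypothesis, and Lemma~\ref{AbelianArtinian} writes $T$ as a direct sum of a finite abelian group and finitely many Pr\"ufer $p$-groups; since $G$ contains no Pr\"ufer subgroup, $T$ is in fact finite. The quotient $F := \varphi(H)/T$ is torsion-free and, as a homomorphic image of $\varphi(H)$, still cotorsion. The aim is then to show $F = 0$, which yields $\varphi(H) = T$ finite and hence $\varphi(k)$ of finite order.

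Suppose for contradiction that $F \neq 0$. Then $F$ is a non-trivial cotorsion subgroup of itself, so $F$ is not cotorsion-free; by the characterisation \cite[Thm.~13.3.8]{Fuchs} of torsion-free cotorsion-free abelian groups, $F$ must include a copy of $\mathbb{Q}$ or of $\mathbb{Z}_p$ for some prime $p$. The step I expect to be the main obstacle is lifting this subgroup from the quotient $F$ back to an actual subgroup of $\varphi(H) \subseteq G$. My plan for this is: let $A \in \{\mathbb{Q}, \mathbb{Z}_p\}$ be contained in $F$ and let $Z \subseteq \varphi(H)$ be its preimage, giving a short exact sequence $0 \to T \to Z \to A \to 0$ of abelian groups with $T$ finite. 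Set $n := |T|$ and consider $nZ$: since $A$ is torsion-free, $nz \in T$ forces $n\bar z = 0$ in $A$, hence $\bar z = 0$, hence $z \in T$ and $nz = 0$; so $nZ \cap T = 0$, and the canonical projection embeds $nZ$ into $nA$. In either case $nA \cong A$ (for $\mathbb{Q}$ by divisibility; for $\mathbb{Z}_p$ because multiplication by $n$ is injective on the torsion-free group $\mathbb{Z}_p$ with image $n\mathbb{Z}_p \cong \mathbb{Z}_p$). This produces $A \hookrightarrow G$, contradicting the hypothesis on $G$ and closing the argument.
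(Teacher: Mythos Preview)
Your argument is correct and coincides with the paper's proof through the point where $T=\mathrm{Tor}(\varphi(H))$ is shown to be finite and $F=\varphi(H)/T$ is torsion-free cotorsion. The only divergence is in the final step. The paper observes that $T$, being finite abelian, is itself cotorsion; since $F$ is torsion-free, the defining property of cotorsion applied to $T$ splits $0\to T\to \varphi(H)\to F\to 0$, so $\varphi(H)\cong T\oplus F$ and $F$ is (isomorphic to) a subgroup of $G$. Then $F$ is simultaneously cotorsion and cotorsion-free, hence trivial. This avoids your case analysis and the lifting computation entirely. Your route instead extracts a copy of $\mathbb{Q}$ or $\mathbb{Z}_p$ inside $F$ and lifts it by hand via $nZ$; this works, but note a small imprecision: you need that the projection restricted to $nZ$ is \emph{onto} $nA$ (which holds because $Z\twoheadrightarrow A$ is surjective), so that $nZ\cong nA\cong A$. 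A mere embedding $nZ\hookrightarrow A$ would not suffice, since e.g.\ $\mathbb{Z}$ embeds in $\mathbb{Q}$ without containing a copy of $\mathbb{Q}$.
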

\begin{proof}
For $k\in K$ the subgroup $\overline{\langle k\rangle}$ of $K$ is a compact abelian group \cite[Lemma 4.4]{Stroppel}, therefore algebraically compact, therefore cotorsion. Thus the homomorphic image $A=\varphi(\overline{\langle k\rangle})$ is cotorsion. Let $\rm{Tor}(A)$ denote the torsion subgroup of $A$. As $A$ is an abelian subgroup of $G$ we know that $\rm{Tor}(A)$ is artinian, and therefore $\rm{Tor}(A)$ is a direct sum of a finite abelian group and some Pr\"ufer groups by Lemma \ref{AbelianArtinian}. But $G$ does not include any Pr\"ufer groups, so $\rm{Tor}(A)$ is finite.\\
Now, $\rm{Tor}(A)$ is a finite abelian group, and therefore cotorsion. Since $A/\rm{Tor}(A)$ is torsion-free, the short exact sequence\\
\begin{center}
\begin{tikzcd}
0 \arrow[r] & \rm{Tor}(A) \arrow[r] & A \arrow[r] & A/\rm{Tor}(A) \arrow[r] & 0
\end{tikzcd}
\end{center}

splits, and so we may write $A\cong \rm{Tor}(A)\oplus A/\rm{Tor}(A)$. Now $A/\rm{Tor}(A)$ is torsion-free and it also does not include $\mathbb{Q}$ or any $\mathbb{Z}_p$ since it is isomorphic to a subgroup of $A$ (hence isomorphic to a subgroup of $G$). Therefore $A/\rm{Tor}(A)$ is cotorsion-free. But $A/\rm{Tor}(A)$ is a homomorphic image of the cotorsion group $A$, and therefore is itself cotorsion. Thus $A/\rm{Tor}(A)$ is trivial. In particular $\rm{Tor}(A)=A$ and $A$ is finite. More particularly $\varphi(k)$ has finite order. Hence $\varphi(K)$ is a torsion group.
\end{proof}

\begin{proof}[{\bf Proof of Theorem B}]
We will first show the part of the theorem in which the target group only satisfies conditions (i)-(iii). The statement for a group also satisfying the fourth condition will be an easy corollary of that case.
	
Let $\varphi\colon L\to G$ be a group homomorphism from a locally compact group $L$ into a group $G$ that satisfies the conditions (i)-(iii) of Theorem B. Without loss of generality we can assume $\varphi(L)=G$, since every subgroup of $G$ satisfies (i)-(iii) of Theorem B. 
	
\textit{Step 1: $\varphi(L^\circ)$ is trivial.}\\
Due to Iwasawa's structure Theorem \ref{IwasawaVanDantzig} we can write the connected component $L^\circ$ as $L^\circ=H_1\cdot...\cdot H_k\cdot K$, where each $H_i$ is isomorphic to $\mathbb{R}$ and $K$ is a compact connected group. Due to Lemma \ref{Rtrivial}, the image $\varphi(H_i)$ of each $H_i$ is trivial. Also, since $K$ is compact connected it is divisible \cite[Cor. 2]{Mycielski}, and therefore $\varphi(K)$ is also trivial by Lemma \ref{CompactTorsion}.  

Since $\varphi(L^\circ)$ is trivial, we know that $\varphi$ descends to a homomorphism $\overline{\varphi}:L/L^\circ\to G$:
\begin{center}
\begin{tikzcd}
L \arrow[rr, "\varphi"] \arrow[swap, rd, "\pi"] &                                       & G \\
                                          & L/L^\circ \arrow[swap, ru, "\overline{\varphi}"] &  
\end{tikzcd}

\end{center}

\textit{Step 2: Apply van Dantzig's Theorem to $L/L^\circ$:}\\
Since $L/L^\circ$ is a totally disconnected locally compact group, we apply van Dantzig's Theorem \ref{IwasawaVanDantzig} to find at least one compact open subgroup $K_1\subseteq L/L^\circ$. This allows us to differentiate the following two cases:\\
\textit{Case A:} 
There exists a compact open subgroup $K\subseteq L/L^\circ$ such that $\overline{\varphi}(K)$ is trivial. Then $\pi^{-1}(K)$ is open in $L$ and completely inside the kernel of $\varphi$, so $\ker(\varphi)$ is open. Hence the map $\varphi$ is continuous.\\
\textit{Case B:} There is no such compact open subgroup. This case requires more separate steps:

\textit{Step B.1: Find a ``minimal" compact open subgroup $K_0\subseteq L/L^\circ$ using condition (iii):}\\
By Lemma \ref{CompactTorsion} we know that for every compact open subgroup $K$, the image $\overline{\varphi}(K)$  is a torsion group. We consider the following family of torsion subgroups of $G$:
$$\mathcal{T}=\{\bar{\varphi}(K)\big| K\subseteq L/L^\circ\text{ compact open subgroup}\}.$$
Since torsion subgroups of $G$ are artinian, the set  $\mathcal{T}$ has a minimal element, we choose one and we call it $\overline{\varphi}(K_0)$.  

\textit{Step B.2: We now show that $\overline{\varphi}(K_0)$ is a normal subgroup, hence the theorem holds for t.d.l.c. groups:}\\
The group $gK_0g^{-1}$ is a compact open subgroup of $L/L^\circ$ for all $g\in L/L^\circ$, so $gK_0g^{-1} \cap K_0$ is as well. Thus we have $\bar{\varphi}(gK_0g^{-1}\cap K_0)\subseteq \bar{\varphi}(K_0)$ and due to minimality we have $\bar{\varphi}(gK_0g^{-1}\cap K_0)= \bar{\varphi}(K_0)$. That means $\bar{\varphi}(gK_0g^{-1})=\bar{\varphi}(K_0)$ for every $g\in L/L^\circ$ and therefore $\bar{\varphi}(L/L^\circ)$ is contained in the normalizer ${\rm Nor}_{L/L^\circ}(\bar{\varphi}(K_0))$. By assumption the map $\varphi$ is surjective, therefore $G=\overline{\varphi}(L/L^\circ)={\rm Nor}(\bar{\varphi}(K_0))$. We now set $N_0:=\bar{\varphi}(K_0)$.

\textit{Step B.3: Get the desired result for $\varphi$.}\\
The subgroup  $\overline{\varphi}^{-1}(\overline{\varphi}(K_0))$ is an open normal subgroup in $L/L^\circ$. Therefore $N:=\pi^{-1}(\overline{\varphi}^{-1}(\overline{\varphi}(K_0))$ is an open normal subgroup of $L$, due to the continuity of $\pi$, and $\varphi(N)=\overline{\varphi}(K_0)$ which is torsion. 
This concludes the proof if $G$ satisfies conditions (i)-(iii).

If $G$ additionally satisfies condition (iv), then $\varphi(N)$ has to be trivial. But then $N\subseteq \ker(\varphi)$, so the kernel is open and $\varphi$ is continuous.
\end{proof}

If $L$ is an \textit{almost connected} locally compact group (that is, a locally compact group where $L/L^\circ$ is compact) we might strengthen Theorem B:

\begin{corollary}
If $G$ satisfies conditions (i) and (ii) of Theorem B and \textbf{abelian} torsion subgroups are artinian, then for any homomorphism $\varphi\colon L\to G$ from an almost connected locally compact group $L$ to $G$ the image $\varphi(L)$ is a torsion group.
\end{corollary}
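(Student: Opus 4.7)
The plan is to retrace the first two steps of the proof of Theorem~B in this (simpler) setting, observing along the way that the artinian hypothesis is only ever invoked for abelian torsion subgroups, so the present weaker assumption suffices. Because $L$ is almost connected, the totally disconnected quotient $L/L^\circ$ is itself already compact, which entirely eliminates the need for the ``minimal compact open subgroup'' argument (Case B of the proof of Theorem~B); the argument therefore collapses to just killing $L^\circ$ and invoking Lemma~\ref{CompactTorsion} once.

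First I would show that $\varphi(L^\circ)$ is trivial. Iwasawa's structure Theorem \ref{IwasawaVanDantzig} writes $L^\circ=H_1\cdots H_k\cdot K$ with each $H_i\cong\mathbb{R}$ and $K$ a compact connected group. Each $H_i$ is divisible, and every compact connected group is divisible, so Corollary \ref{Rtrivial} -- whose only hypotheses on $G$ are that it avoid $\mathbb{Q}$ and the Pr\"ufer groups $\mathbb{Z}(p^\infty)$, both of which are exactly conditions (i) and (ii) -- gives $\varphi(H_i)=\{1\}$ and $\varphi(K)=\{1\}$. Since $L^\circ$ is generated as a set by $H_1\cup\cdots\cup H_k\cup K$ and $\varphi$ is a homomorphism, this yields $\varphi(L^\circ)=\{1\}$, and $\varphi$ factors through the canonical projection $\pi\colon L\to L/L^\circ$ as $\varphi=\overline{\varphi}\circ\pi$.

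Since $L$ is almost connected, $L/L^\circ$ is compact, and $\overline{\varphi}\colon L/L^\circ\to G$ is a homomorphism from a compact group into $G$. I would then apply Lemma \ref{CompactTorsion} directly to $\overline{\varphi}$: the statement of that lemma already only requires the \emph{abelian} torsion subgroups of $G$ to be artinian, and its proof uses the artinian hypothesis solely on $\mathrm{Tor}(\overline{\varphi}(\overline{\langle k\rangle}))$, an abelian subgroup of $G$ arising from the compact abelian group $\overline{\langle k\rangle}$. Thus all hypotheses are in force, and the lemma yields that $\varphi(L)=\overline{\varphi}(L/L^\circ)$ is a torsion group, as desired.

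There is no serious obstacle: the corollary is essentially a reading-off of the first half of the proof of Theorem~B in the setting where the totally disconnected part is already compact. The only genuinely substantive point to highlight in the write-up is that both Corollary \ref{Rtrivial} and Lemma \ref{CompactTorsion} remain available under the weakened hypothesis, i.e.\ one never needs to invoke the artinian condition on a nonabelian torsion subgroup of $G$.
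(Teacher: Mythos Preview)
Your proposal is correct and follows essentially the same route as the paper's proof: kill $\varphi(L^\circ)$ via Iwasawa's decomposition and the divisibility argument (Corollary~\ref{Rtrivial}), then apply Lemma~\ref{CompactTorsion} to the compact quotient $L/L^\circ$. Your observation that only the \emph{abelian} artinian hypothesis is needed is exactly the point of the corollary, and your justification via the proof of Lemma~\ref{CompactTorsion} is the intended one.
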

\begin{proof}

Analogous to step 1 of the proof of Theorem B we see, that $\varphi(L^\circ)$ is trivial. Since $L/L^\circ$ is compact the image $\bar{\varphi}(L/L^\circ)$ is torsion by Lemma \ref{CompactTorsion}. But then $\varphi(L)=\bar{\varphi}(L/L^\circ)$ is also torsion.
\end{proof}

In particular, every homorphism from an almost connected locally compact group into a CAT$(0)$ group has torsion image (for the proof see the proof of Proposition 5.2 (3)).

\section{On discrete groups with properties (i)-(iii) of Theorem B}
In this last section we collect many groups that arise in geometric group theory and satisfy the conditions (i)-(iii) of Theorem B. 
\medskip

Let $\mathcal{G}$ denote the class of all groups $G$ with the following three properties:
\begin{enumerate}
\item[(i)] $G$ does not include $\mathbb{Q}$ or the $p$-adic integers $\mathbb{Z}_p$ 	for any prime $p$ as a subgroup,
\item[(ii)] $G$ does not include the Pr\"ufer $p$-group $\mathbb{Z}(p^\infty)$ for any prime $p$ as a subgroup and
\item[(iii)]  torsion subgroups in $G$ are artinian.
\end{enumerate}
The class $\mathcal{G}$ is closed under taking subgroups. Furthermore, this class is closed under taking finite graph products of groups and group extensions, which we will show in Section 5.3. 

An algebraic property of a group that prohibits the existence of subgroups isomorphic to $\mathbb{Q}$ or $\mathbb{Z}(p^\infty)$ is residual finiteness. We recall that a group $G$ is said to be \textit{residually finite} if for any $g\in G-\left\{1_G\right\}$ there exists a finite group $F_g$ and a group homomorphism $f\colon G\to F_g$ such that $f(g)\neq 1$. It follows from the definition that any subgroup of a residually finite group is also residually finite. 
\begin{lemma}
\label{ResFinite}
Let $G$ be a group. If $G$ is residually finite, then $G$ does not have non-trivial divisible subgroups. In particular, a residually finite group does not contain $\mathbb{Q}$ or the Pr\"ufer $p$-group $\mathbb{Z}(p^\infty)$ for any prime $p$ as a subgroup.
\end{lemma}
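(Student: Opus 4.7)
My plan for proving this lemma is quite direct, using only the definition of divisibility and residual finiteness together with Lagrange's theorem.

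First I would establish the main implication: if $G$ has a non-trivial divisible subgroup $D$, then $G$ is not residually finite. The idea is to take any non-trivial $d \in D$ and seek a contradiction to the existence of a separating homomorphism. Suppose $f \colon G \to F$ is a homomorphism into a finite group with $f(d) \neq 1$. Set $n := |F|$. Since $D$ is divisible and contained in $G$, there exists $h \in D \subseteq G$ with $h^n = d$. Applying $f$ yields $f(d) = f(h)^n$, and by Lagrange's theorem $f(h)^n = f(h)^{|F|} = 1$ in $F$, so $f(d) = 1$, contradicting the choice of $f$. Hence no such $f$ can exist, contradicting residual finiteness of $G$.

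For the second part (the "in particular" statement), I would simply recall from the paper that $\mathbb{Q}$ is divisible and that each Pr\"ufer $p$-group $\mathbb{Z}(p^\infty)$ is divisible (this is stated in Section~2.3). Both are non-trivial, so if either were a subgroup of $G$, the first part would give a contradiction.

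I do not expect any serious obstacle here; the argument is essentially a one-line contradiction once one observes that a finite quotient cannot accommodate arbitrary root extraction. The only subtlety to mention explicitly is that divisibility of the subgroup $D$ guarantees the $n$-th root $h$ lies in $D$ (hence in $G$), which is why the computation with $f$ makes sense. The proof will be about half a page and will cite the paper's own definitions of divisibility and residual finiteness.
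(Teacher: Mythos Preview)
Your proposal is correct and essentially identical to the paper's own proof: both pick a non-trivial element of the divisible subgroup, invoke residual finiteness to obtain a separating homomorphism to a finite group $F$, extract an $|F|$-th root via divisibility, and derive the contradiction $f(d)=f(h)^{|F|}=1$ from Lagrange's theorem. The only cosmetic difference is that the paper applies residual finiteness to the subgroup $D$ (using that subgroups of residually finite groups are residually finite) whereas you apply it directly to $G$; both are fine.
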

\begin{proof}
Assume that $G$ has a non-trivial divisible subgroup $D$. For $g\in D-\left\{1_G\right\}$ there exists a finite group $F$ and a group homomorphism $f\colon D\to F$ such that $f(g)\neq 1_F$. Since $D$ is divisible, there exists $d\in D$ such that $d^{ord(F)}=g$, where ${\rm ord}(F)$ is the cardinality of the group $F$. We obtain 
$$f(g)=f(d^{{\rm ord}(F)})=f(d)^{{\rm ord}(F)}=1_F.$$ 
This contradicts the fact that $f(g)\neq 1_F$, thus $D$ is trivial. 
\end{proof}

Our goal is to convince the reader that the class $\mathcal{G}$ is huge and contains  many geometric groups, i.e. groups with nice actions on metrically injective spaces or Gromov-hyperbolic spaces or ${\rm CAT}(0)$ spaces. For a definition of a metrically injective group see Subsection 5.1 and for definitions and further properties of Gromov-hyperbolic groups and ${\rm CAT}(0)$ groups we refer to \cite{BridsonHaefliger}.

\begin{proposition}
\label{classG}
If $G$ is
	\begin{enumerate} 
	 \item a virtually $lcH$-slender group,
	 \item a cocompactly cubulated ${\rm CAT}(0)$ group,
	\item a ${\rm CAT}(0)$ group whose torsion subgroups are artinian, e.g. a Coxeter group,
	\item a metrically injective group whose torsion subgroups are artinian, e.g. a Helly group whose torsion subgroups are artinian,
	\item a Gromov-hyperbolic group,
	\item  a finitely generated residually finite group whose torsion subgroups are artinian, e.g. the (outer) automorphism group of a right-angled Artin group,
	\item a one-relator group,
	\item a finitely generated linear group in characteristic $0$,
	\item the Higman group, 	
	\end{enumerate}
then $G$ is in the class $\mathcal{G}$.
\end{proposition}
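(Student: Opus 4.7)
The plan is to verify conditions (i), (ii), and (iii) for each of the nine families, using a small stock of general principles and then treating the few cases that fall outside of them.

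The recurring ingredients I would assemble first are the following: (a) if $G$ is virtually torsion-free, with a torsion-free subgroup $H$ of finite index, then every torsion subgroup of $G$ injects into $G/H$ and is therefore finite, so (iii) holds automatically; (b) any finitely generated group is countable and so cannot contain the uncountable group $\mathbb{Z}_p$; (c) by Lemma \ref{ResFinite}, a residually finite group contains no non-trivial divisible subgroup, and in particular contains neither $\mathbb{Q}$ nor any Pr\"ufer group $\mathbb{Z}(p^\infty)$. These three observations handle the bulk of the list.

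With this toolkit in hand I would dispatch the cases in batches. Case (1) is immediate from Corollary A together with (a), noting that a Pr\"ufer subgroup would be an infinite torsion subgroup. Case (8) follows by combining Selberg's lemma (virtual torsion-freeness) and Malcev's theorem (residual finiteness) with (a), (b) and (c); this simultaneously covers Coxeter groups, which are linear, and so provides the examples claimed inside (3). Case (6) is a direct application of (b), (c) and the hypothesis on torsion subgroups. The Higman group (9) is torsion-free and finitely generated, and the absence of $\mathbb{Q}$ and $\mathbb{Z}_p$ inside it is precisely the input supplied by S.~Corson and credited in the Acknowledgment. For Gromov-hyperbolic groups (5), classical theory shows that torsion subgroups are finite and that every infinite abelian subgroup is virtually cyclic, which simultaneously verifies (iii) and rules out all three forbidden abelian groups.

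For the CAT$(0)$ cases (2) and (3) and the metrically injective case (4) I would use that every abelian subgroup of a group acting geometrically on a CAT$(0)$ space (respectively an injective metric space) is finitely generated, via the flat torus theorem of Bridson--Haefliger and its injective-metric analogue due to Lang and Haettel. Since $\mathbb{Q}$, $\mathbb{Z}_p$ and $\mathbb{Z}(p^\infty)$ are abelian but not finitely generated, none embeds. Condition (iii) is by hypothesis in (3) and (4); for (2) it follows from the fact that in a cocompactly cubulated CAT$(0)$ group finite subgroups stabilize a cube and have uniformly bounded order, so every torsion subgroup is finite.

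The case I expect to be the main obstacle is the one-relator case (7), because one-relator groups are not known to be residually finite in general, so Lemma \ref{ResFinite} is unavailable. Here my plan is to split on whether the defining relator is a proper power: if it is, B.~B.~Newman's spelling theorem and Newman's hyperbolicity theorem show that $G$ is hyperbolic with cyclic torsion, reducing to case (5); if it is not, the group is torsion-free of cohomological dimension two and locally indicable by Brodskii's theorem, and together with countability (which handles $\mathbb{Z}_p$) this structure should be used to exclude embeddings of $\mathbb{Q}$ and of $\mathbb{Z}(p^\infty)$. Building a clean, reference-based argument at this exact point is the step I would spend most effort on.
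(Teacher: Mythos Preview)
Your overall strategy matches the paper's, and cases (1), (6), (8) are essentially identical. Three points deserve comment.

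For case (7), the torsion-free subcase that you flag as the main obstacle is dispatched in the paper by a single citation: Newman \cite{Newman} already shows that a torsion-free one-relator group contains neither $\mathbb{Q}$ nor $\mathbb{Z}_p$. No local indicability or cohomological-dimension argument is needed, and $\mathbb{Z}(p^\infty)$ is excluded by torsion-freeness alone.

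For case (4), your route differs from the paper's. You invoke an injective-metric analogue of the flat torus theorem to conclude that abelian subgroups are finitely generated; the paper instead develops translation-length machinery in Section~5.1 (the barycenter construction of Lemma~\ref{bar} feeding into Lemma~\ref{divisible}) to show directly that any almost divisible subgroup of a metrically injective group is torsion, and separately bounds torsion orders via finitely many conjugacy classes of finite subgroups (Lemma~\ref{torsion}). Your approach may be viable through Descombes--Lang \cite{bicombings}, but the attribution to ``Lang and Haettel'' is imprecise, and the paper's argument is self-contained. The paper also routes case (5) through (4), using that Gromov-hyperbolic groups are Helly; your direct virtually-cyclic-abelian argument for (5) is more elementary and equally valid.

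For case (2) your argument has a gap: uniformly bounded order of \emph{finite} subgroups does not by itself force \emph{torsion} subgroups to be finite (an infinite elementary abelian $2$-group shows this fails abstractly). One needs the additional input that a torsion subgroup acting on a finite-dimensional CAT(0) cube complex has a global fixed point; the paper simply cites \cite[Cor.~G]{CapraceSageev} for the finiteness of torsion subgroups in cocompactly cubulated groups. Similarly, for (9) the paper obtains the exclusion of $\mathbb{Q}$ from Martin \cite[Thm.~E]{Martin} rather than from an unpublished argument of Corson.
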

\begin{proof}
First of all we note that the groups in (2)-(9) are all finitely generated, in particular countable. Thus these groups can not have $\mathbb{Z}_p$ as a subgroup since this group is uncountable.

\textit{to (1):} Let $G$ be a virtually $lcH$-slender group. By Corollary A the group $G$ does not include $\mathbb{Q}$ or  $\mathbb{Z}_p$ for any prime $p$ as a subgroup. Further, the group $G$ is virtually torsion-free, in particular $G$ has a normal torsion-free subgroup $H$ of finite index. Let $T\subseteq G$ be a torsion subgroup. We consider the map $\pi\circ\iota \colon T\hookrightarrow G\twoheadrightarrow G/H$. Since $H$ is torsion-free, this map $\pi\circ\iota$ is injective and therefore $T$ has to be finite. Thus, the group $G$ is in the class  $\mathcal{G}$.

\textit{to (2) and (3):} It is known that an abelian subgroup of a ${\rm CAT}(0)$ group is finitely generated, see \cite[Thm. I.4.1]{Davis}, thus a ${\rm CAT}(0)$ group does not have $\mathbb{Q}$ as a subgroup. Further, a {\rm CAT}(0) group has only finitely many conjugacy classes of finite subgroups \cite[Thm. I.4.1]{Davis}, hence there is a bound on the order of finite order elements in a ${ \rm CAT}(0)$ group and therefore this group can not have $\mathbb{Z}(p^\infty)$ as a subgroup. Furthermore, it is known that torsion subgroups in Coxeter groups are finite \cite[Prop. 7.4]{KramerVarghese}. Finiteness of torsion subgroups of cocompactly cubulated groups follows from \cite[Cor. G]{CapraceSageev}.\\
Despite the fact that ${\rm CAT}(0)$ groups are very popular, the question regarding the finiteness of torsion subgroups of general ${\rm CAT}(0)$ groups is still open. 

\textit{to (4):} See Section 5.1, in particular Lemma \ref{divisible}, Lemma \ref{torsion} and Corollary  \ref{Helly}. 

\textit{to (5):} Let $G$ be a Gromov-hyperbolic group. It is known that torsion subgroups of Gromov-hyperbolic groups  are finite \cite[Chap. 8 Cor. 36]{Ghys}, thus $\mathbb{Z}(p^\infty)$ is not a subgroup of $G$ and any torsion subgroup of $G$ is artinian. Further, it was proven in \cite[Cor. 6.8]{Helly} that any Gromov-hyperbolic group is a Helly group. Thus, by (4) the group $G$ is in the class $\mathcal{G}$.

\textit{to (6):}  By Lemma \ref{ResFinite} we know that a residually finite group does not include $\mathbb{Q}$ or $\mathbb{Z}(p^\infty)$ as a subgroup. Hence, a finitely generated residually finite group whose torsion subgroups are artinian is in the class $\mathcal{G}$. 

Given a right-angled Artin group $A_\Gamma$, there exist a number $m\in\mathbb{N}$ and an  injective group homomorphism $A_\Gamma\hookrightarrow{ \rm GL}_m(\mathbb{R})$
\cite[Cor. 3.6]{HsuWise}, thus any right-angled Artin group is a finitely generated linear group and as such it is residually finite \cite{Malcev}. Furthermore, it was proven in \cite{Baumslag} that the automorphism group of a finitely generated residually finite group is also residually finite, thus ${\rm Aut}(A_\Gamma)$ is residually finite. It was proven in \cite[Thm. 4.2]{CVout} that also the outer automorphism group of a right-angled Artin group is residually finite. Hence, by Lemma \ref{ResFinite} the (outer) automorphism group of a right-angled Artin group does not include $\mathbb{Q}$ or $\mathbb{Z}(p^\infty)$ as a subgroup.  Torsion subgroups in the (outer) automorphism group of a right-angled Artin group are finite, which follows from the fact that this group is virtually torsion-free, as we already mentioned before.

\textit{to (7):} Given a one-relator group $G$, there are two possibilities: (i) $G$ has torsion elements, (ii) $G$ is torsion-free. In the first case it was proven in \cite[Thm. IV.5.5]{Lyndon} that $G$ is hyperbolic, thus by (3) this group is in the class $\mathcal{G}$. If $G$ is torsion-free, then $G$ does not include $\mathbb{Q}$ or $\mathbb{Z}_p$ as a subgroup \cite[Thm. 1]{Newman}.

\textit{to (8):} Let $G$ be a finitely generated linear group in characteristic $0$. It was proven in \cite{Selberg} that $G$ is virtually torsion-free, hence torsion subgroups of $G$ are always finite. Further, it is known that $G$ is residually finite \cite{Malcev}, thus by (5) this group is in the class $\mathcal{G}$.

\textit{to (9):} The Higman group was defined in \cite{Higman} and is an amalgam of two torsion-free groups \cite[Thm. IV.2.7]{Lyndon}, thus it is torsion-free by \cite[\S 6.2 Prop. 21]{Serre}. 
Further, this group does not include $\mathbb{Q}$ as a subgroup, which follows from
\cite[Thm. E]{Martin} .

\end{proof}

\subsection{Geometric groups}
To prove the fact that a metrically injective group $G$ is in the class $\mathcal{G}$, we first need to gather some information about these groups. The goal is to briefly introduce the framework in which these groups are studied, state the results we need in order to prove Proposition \ref{classG} (4) and give its proof.

Let $(X,d)$ be a metric space. For an isometry $f\colon X\to X$ the \textit{translation length of }$f$, denoted by $|f|$ is defined as $|f|:={\rm inf}\left\{d(x,f(x))\mid x\in X\right\}$. We have the following 
general classification of the isometries of a metric space $X$ in terms of ${\rm Min}(f):=\left\{x\in X\mid d(x, f(x))=|f|\right\}$: An isometry $f$ is said to be \textit{parabolic} if ${\rm Min}(f)=\emptyset$ and \textit{semi-simple} otherwise. In the latter case, $f$ is \textit{elliptic} if 
for any $x\in X$ the subset $\left\{f^n(x)\mid n\in \mathbb{N}\right\}\subseteq X$  is bounded and \textit{hyperbolic} if it is unbounded. 

For a group $G$ and a metric space $X$, a group action $\Phi\colon G\to{\rm Isom}(X)$ is called \textit{proper} if for each $x\in X$ there exists a real number $r>0$ such that the set  $\left\{g\in G\mid \Phi(g)(B_r(x))\cap B_r(x)\neq\emptyset\right\}$ is finite and $\Phi$ is called \textit{cocompact} if there exists a compact subset $K\subseteq X$ such that $\Phi(G)(K)=X$. In geometric group theory it is common to work with actions that have both properties, such an action is called \textit{geometric}. 

The main idea of  geometric group theory is, given a group $G$, to construct a geometric action on a metric space $X$ with rich geometry and to use this geometry to prove algebraic results about the group $G$. Here, the spaces will be metrically injective spaces, while the algebraic property of the groups will be the occurrence of subgroups isomorphic to $\mathbb{Q}$, the $p$-adic integers $\mathbb{Z}_p$ or the Pr\"ufer $p$-group $\mathbb{Z}(p^\infty)$. 

Before proving Proposition \ref{classG} (4) we need to discuss a few useful results. We first need to study groups that are \textit{almost divisible}, that is, 
there is an unbounded sequence $(n_i)_{i\in \mathbb{N}}$ of natural numbers such that the group is $n_i$ divisible for each $n_i$. We note that the groups $\mathbb{Q}$, the $p$-adic integers $\mathbb{Z}_p$ and the Pr\"ufer $p$-group $\mathbb{Z}(p^\infty)$ are examples of almost divisible groups.

\begin{proposition}
	\label{toolQ}
	Let $\Phi\colon G\to{\rm Isom}(X)$ be a geometric action on a metric space $X$. 	
If for any hyperbolic isometry $\Phi(g)\in \Phi(G)$  we have $|\Phi(g)^n|\geq n\cdot |\Phi(g)|$ for all $n\in\mathbb{N}$, then any almost divisible subgroup $H\subseteq G$ is a torsion group.
\end{proposition}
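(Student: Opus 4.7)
The plan is to argue by contradiction: suppose some $h \in H$ has infinite order. First I would note that properness forces $\ker\Phi$ to be finite (since a bounded orbit of a cyclic subgroup under a proper action is finite, by a standard pigeonhole argument against properness at a single point), so $\Phi(h)$ has infinite order and is therefore not elliptic. Invoking almost divisibility, for each $n_i$ in the unbounded sequence I would choose $h_i \in H$ with $h_i^{n_i} = h$; each $h_i$ again has infinite order, so $\Phi(h_i)$ is hyperbolic or parabolic. In the hyperbolic case the standing hypothesis gives $|\Phi(h)| = |\Phi(h_i)^{n_i}| \geq n_i |\Phi(h_i)|$, hence $|\Phi(h_i)| \leq |\Phi(h)|/n_i \to 0$; choosing $x_i \in \mathrm{Min}(\Phi(h_i))$ then yields $d(x_i,\Phi(h_i)(x_i)) \to 0$.

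Next I would use cocompactness to move these near-fixed points into a fixed compact set $K \subseteq X$ with $\Phi(G)\cdot K = X$: pick $g_i \in G$ with $y_i := \Phi(g_i)(x_i) \in K$ and set $\tilde{h}_i := g_i h_i g_i^{-1}$, so that $d(y_i,\Phi(\tilde{h}_i)(y_i)) = d(x_i,\Phi(h_i)(x_i)) \to 0$. Covering $K$ by finitely many balls $B_r(z_j)$, where $r > 0$ is chosen small enough that $\{g \in G : \Phi(g)(B_{2r}(z_j)) \cap B_{2r}(z_j) \neq \emptyset\}$ is finite for each $j$ by properness, I would pass to a subsequence with all $y_i$ in one fixed ball $B_r(z)$. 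For large $i$ the displacement of $y_i$ is well below $r$, so only finitely many $\tilde{h}_i$ are possible; a further subsequence makes $\tilde{h}_i = \tilde{h}$ constant. The identity $g_i h g_i^{-1} = \tilde{h}^{n_i}$ then gives $|\Phi(\tilde{h})^{n_i}| = |\Phi(h)|$ for infinitely many unbounded $n_i$, and $\Phi(\tilde{h})$ has infinite order because $\tilde{h}^{n_i}$ is conjugate to $h$. Applying the hyperbolic hypothesis to $\tilde{h}$ gives $|\Phi(\tilde{h})| \leq |\Phi(h)|/n_i$ for arbitrarily large $n_i$, forcing $|\Phi(\tilde{h})|=0$ in contradiction with hyperbolicity.

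The main obstacle I foresee is the parabolic case: the standing hypothesis controls only hyperbolic isometries, so I must rule out that $\Phi(h_i)$ or $\Phi(\tilde{h})$ is parabolic. In the settings targeted by this proposition (for example, metrically injective spaces carrying geometric actions) one expects infinite-order isometries to be semisimple once elliptic is excluded, so the cleanest fix is either to invoke such a semisimplicity dichotomy or to argue separately -- using the triangle-inequality upper bound $|\Phi(g^n)| \leq n|\Phi(g)|$ together with the same cocompactness--properness pigeonhole -- that a parabolic isometry cannot have a hyperbolic power, thereby reducing to the hyperbolic case handled above.
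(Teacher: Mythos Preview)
Your argument is on the right track but takes a much longer road than the paper, and the obstacle you flag is precisely where the two diverge. The paper's proof is only a few lines: it first cites \cite[Thm.~II.6.10]{BridsonHaefliger} for two facts valid for any geometric action --- (a) every isometry in $\Phi(G)$ is semi-simple, and (b) the infimum of the translation lengths of hyperbolic isometries in $\Phi(G)$ is strictly positive --- and then argues exactly as you do in your first paragraph: if $\mathrm{ord}(\Phi(h))=\infty$ then $|\Phi(h)|>0$ by properness, the roots $d_i$ with $d_i^{n_i}=h$ must be hyperbolic (they are semi-simple by (a), and a power of an elliptic is elliptic), and the hypothesis gives $|\Phi(d_i)|\le |\Phi(h)|/n_i\to 0$, contradicting (b). That is the whole proof.

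What you do differently is essentially to re-prove fact (b) by hand: your cocompactness--pigeonhole argument, translating the near-fixed points $x_i$ into a compact set and extracting a constant $\tilde h$, is exactly the standard proof that hyperbolic translation lengths are bounded away from zero under a proper cocompact action. That is correct but unnecessary once you are willing to quote it. More importantly, the ``parabolic obstacle'' you isolate at the end is already dissolved by fact (a): under a geometric action there simply are no parabolic isometries in $\Phi(G)$, so neither $\Phi(h)$, nor $\Phi(h_i)$, nor $\Phi(\tilde h)$ can be parabolic. Your suggested ``cleanest fix'' --- invoke a semi-simplicity dichotomy --- is exactly what the paper does from the outset, and once you adopt it your entire second and third paragraphs can be replaced by a one-line appeal to the positive lower bound on hyperbolic translation lengths.
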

\begin{proof}
First of all we note  that every isometry in $\Phi(G)$ is semi-simple \cite[Thm. II.6.2.10]{BridsonHaefliger}.  Further, it was shown in \cite[Thm. II.6.2.10]{BridsonHaefliger} that if $G$ acts geometrically on a metric space $X$, then the infimum of translation length of hyperbolic isometries is always positive. 

Let $H\subseteq G$ be an almost divisible subgroup. Our goal is to prove that for $h\in H$ the order of $h$ has to be finite. First, we can see that if ${\rm ord}(\Phi(h))< \infty$, then ${\rm ord}(h)<\infty$ since the action is proper, so in that case $h$ is a torsion element. If ${\rm ord}(\Phi(h))=\infty$, then $|\Phi(h)|>0$, because else the action would not be proper. Now $H$ is almost divisible, so there exists an unbounded sequence $(n_i)_{i\in \mathbb{N}}$ such that there exist elements $d_i\in H$ with $h=d_i^{n_i}$. By assumption we  have 
$$|\Phi(h)|=|\Phi(d_i^{n_i})|=|\Phi(d_i)^{n_i}|\geq n_i\cdot |\Phi(d_i)|.$$ 
But since the sequence $(n_i)_{i\in\mathbb{N}}$ is unbounded, that means that the sequence of translation lengths $(|\Phi(d_i)|)_{i\in \mathbb{N}}$ converges to zero, which in turns contradicts the fact that the infimum of translation lengths of hyperbolic isometries is positive.
\end{proof}

The result of Proposition \ref{toolQ} gives us a tool to prove that a group which we are interested in does not include $\mathbb{Q}$ or the $p$-adic integers $\mathbb{Z}_p$ as a subgroup.

\begin{remark}\label{finEmb}
	The inequality condition on the translation lengths of hyperbolic isometries is necessary, since any finitely generated group $G$ acts geometrically on its Cayley-graph ${\rm Cay}(G,S)$ where $S$ is a finite generating set of $G$. There are finitely generated groups that contain divisible torsion-free subgroups, e.g. $\mathbb{Q}$, as can be found in  \cite[Thm. IV]{Neumann}, there even exist finitely presented groups that contain $\mathbb{Q}$ \cite[Thm 1.4, Prop. 1.10]{finitelypresented}.
\end{remark}

Now we are interested in actions on metric spaces, which satisfy the condition on hyperbolic isometries from Proposition \ref{toolQ}. One possibility for this is to study actions on injective metric spaces.

We say a metric space $(X,d)$ is \textit{injective}, if it is an injective object in the category of metric spaces and $1$-Lipschitz maps. 
Examples of injective metric spaces are $\mathbb{R}$-trees or more generally finite dimensional CAT$(0)$ cube complexes with the $l^{\infty}$ metric (see \cite{Bowditch} for more details). Following \cite{Haettel}, we call a group $G$ \textit{metrically injective} if it acts geometrically on an injective metric space.  

A very important tool for studying these spaces are \textit{bicombings}. Given a geodesic metric space $(X,d)$, we call a map $\sigma\colon X\times X\times [0,1]\to X$ a \textit{bicombing} if the family of maps $\sigma_{xy}:=\sigma(x, y, \cdot)\colon [0,1]\to X $ satisfies the following three properties:
\begin{itemize}
\item $\sigma_{xy}$ is a constant speed geodesic from $x$ to $y$, that is $\sigma_{xy}(0)=x, \sigma_{xy}(1)=y$ and $d(\sigma_{xy}(s), \sigma_{xy}(t))=\mid t-s\mid d(x,y)$ for $s, t\in [0,1]$ and $x,y\in X$. 
\item $\sigma_{yx}(t)=\sigma_{xy}(1-t)$ for $t\in [0,1]$ and $x,y\in X$.
\item $d(\sigma_{xy}(t), \sigma_{vw}(t))\leq (1-t)d(x,v)+td(y,w)$ for $t\in [0,1]$ and $x,y,v,w\in X$.
\end{itemize}
Given an isometry $\gamma\in {\rm Isom}(X)$, we say the bicombing $\sigma$ is $\gamma$\textit{-equivariant} if $\gamma\left(\sigma(x,y,t)\right)=\sigma(\gamma(x),\gamma(y),t)$ for all $x,y\in X$ and $t\in [0,1]$.
It was proven in \cite[Prop. 3.8]{Lang} that an injective metric space $X$ always admits an ${\rm Isom}(X)$-equivariant bicombing.

We now want to construct ``barycenter" maps for injective metric spaces to prove the inequality for translation lengths of hyperbolic isometries of the previous proposition holds in injective metric spaces. Descombes and Lang observed in \cite{bicombings} that the barycenter maps given in \cite{Navas} for Busemann spaces  translate to metrically injective spaces. Here we include the complete construction of these maps.
\begin{lemma}
\label{bar}
Let $X$ be an injective metric space. 
\begin{enumerate}
	\item For every $n\in\mathbb{N}$ there exists a map ${\rm bar}_n\colon X^n\to X$ such that for all $x_1,...,x_n,y_1,...,y_n\in X$:
		\begin{enumerate}
		\item $d({\rm bar}_n(x_1, x_2, \ldots, x_n), {\rm bar}_n(y_1, y_2, \ldots, y_n))\leq\frac{1}{n}\Sigma^n_{i=1} d(x_i, y_i)$,
		\item ${\rm bar}_n(x_1, x_2, \ldots, x_n)={\rm bar}_n(x_{\pi(1)}, x_{\pi(2)}, \ldots, x_{\pi(n)})$ for every $\pi\in{\rm Sym}(n)$ and
		\item $\gamma({\rm bar}_n(x_1, x_2, \ldots, x_n))={\rm bar}_n(\gamma(x_1), \gamma(x_2),\ldots, \gamma(x_n))$ for every isometry $\gamma\in{\rm Isom}(X)$.
		\end{enumerate}
	\item Let $\varphi\in {\rm Isom}(X)$ be an isometry. For every $n\in\mathbb{N}$  and $x\in X$, the inequality $|\varphi| \leq\frac{1}{n} d(x, \varphi^n(x))$ holds.
	\end{enumerate}		
\end{lemma}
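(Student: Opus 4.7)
The plan is to prove part (1) by induction on $n$, using the ${\rm Isom}(X)$-equivariant bicombing $\sigma$ on $X$ that was just cited, and then to deduce part (2) from the three properties applied to an isometry orbit. The base cases are immediate: set ${\rm bar}_1(x):=x$ and ${\rm bar}_2(x,y):=\sigma(x,y,1/2)$. Properties (a)-(c) and the normalization ${\rm bar}_2(x,x)=x$ then follow directly from the three bicombing axioms, so I may assume $n\geq 3$ and that ${\rm bar}_{n-1}$ has already been constructed, including the fact that it fixes constant tuples.

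To build ${\rm bar}_n$, I would introduce the auxiliary ``averaging'' map $T_n\colon X^n\to X^n$ defined by $T_n(x_1,\ldots,x_n)=(y_1,\ldots,y_n)$ with $y_i:={\rm bar}_{n-1}(x_1,\ldots,\widehat{x_i},\ldots,x_n)$. Using the symmetry of ${\rm bar}_{n-1}$ and its inductive $\tfrac{1}{n-1}$-Lipschitz estimate, one checks that $d(y_i,y_j)\leq\tfrac{1}{n-1}d(x_i,x_j)$, so ${\rm diam}(T_n(x))\leq\tfrac{1}{n-1}{\rm diam}(x)$; combined with the bound $d(x_i,y_i)\leq{\rm diam}(x)$, obtained by comparing $y_i$ with ${\rm bar}_{n-1}(x_i,\ldots,x_i)=x_i$, this makes the sequence $T_n^k(x)$ Cauchy in $X^n$. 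Since injective metric spaces are complete, $T_n^k(x)$ converges to a constant tuple $(b,\ldots,b)$, and I set ${\rm bar}_n(x_1,\ldots,x_n):=b$. Symmetry (b), isometry-equivariance (c), and the normalization on constant tuples are inherited from the analogous evident properties of $T_n$.

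The subtle point, and the main obstacle, is the sharp Lipschitz constant in (a). Equipping $X^n$ with the metric $D(x,x'):=\sum_{i=1}^n d(x_i,x'_i)$, the same inductive estimate shows $D(T_n(x),T_n(x'))\leq D(x,x')$, so $T_n$ is only $1$-Lipschitz with respect to $D$, and iterating gives $D(T_n^k(x),T_n^k(x'))\leq D(x,x')$ for every $k$. The factor $\tfrac{1}{n}$ is then recovered in the limit: as $k\to\infty$ all $n$ coordinates of $T_n^k(x)$ collapse to ${\rm bar}_n(x_1,\ldots,x_n)$, and similarly for $x'$, so the left-hand side tends to $n\cdot d({\rm bar}_n(x),{\rm bar}_n(x'))$. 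Dividing yields precisely property (a). Any attempt to read off the constant $\tfrac{1}{n}$ at a finite iteration stage would recover only the constant $1$, so this limiting phenomenon is essential and is the heart of the construction.

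Finally, part (2) follows from (1) by a clean symmetrization. Fix $x\in X$ and $\varphi\in{\rm Isom}(X)$, and set $b:={\rm bar}_n(x,\varphi(x),\ldots,\varphi^{n-1}(x))$. Equivariance (c) gives $\varphi(b)={\rm bar}_n(\varphi(x),\varphi^2(x),\ldots,\varphi^n(x))$, and symmetry (b) rewrites this as ${\rm bar}_n(\varphi^n(x),\varphi(x),\ldots,\varphi^{n-1}(x))$. Comparing the two tuples coordinate-wise, all entries agree except the first, so property (a) yields $d(b,\varphi(b))\leq \tfrac{1}{n}d(x,\varphi^n(x))$. Since $|\varphi|=\inf_{y\in X}d(y,\varphi(y))\leq d(b,\varphi(b))$, the stated inequality follows immediately.
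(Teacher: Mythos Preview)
Your proof is correct and follows essentially the same route as the paper: the identical base cases ${\rm bar}_1$, ${\rm bar}_2$, the same leave-one-out averaging map iterated to a limit (what the paper writes as the sequence $(y_k)$), and an identical derivation of part~(2) via (c), (b), (a). If anything, you supply more detail than the paper does---the diameter contraction, the Cauchy estimate, and the $\ell^1$ argument for the sharp constant $\tfrac{1}{n}$---where the paper simply refers to Navas.
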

\begin{proof}
Let $X$ be an injective metric space and $\sigma$ be an ${\rm Isom}(X)$-equivariant bicombing. 
We first define ${\rm bar}_1\colon X\to X$ as ${\rm bar}_1(x):=x$ for $x\in X$ and ${\rm bar}_2:X\times X\to X$ as 
${\rm bar}_2(x,y):=\sigma_{xy}(\frac{1}{2})$ for $x,y\in X$. The maps ${\rm bar}_1$ and ${\rm bar}_2$ obviously satisfy conditions (a), (b) and (c).
Now assuming that ${\rm bar}_n$ has been defined and satisfies the properties (a), (b) and (c) we define ${\rm bar}_{n+1}((x_1,....,x_{n+1}))$ as follows:

For a tupel $z=(z_1,z_2,...,z_n, z_{n+1})\in X^{n+1}$ we set 
$\hat{z}^{(i)}:=(z_1,z_2,...,z_{i-1},z_{i+1},...,z_n, z_{n+1}).$ 
We now define a sequence 
$(y_k)_{k\in\mathbb{N}}\text{ where each }y_k=(y_{1k}, ...., y_{(n+1)k})\in X^{n+1}$. We start with 
$y_1:=(x_1, ...,x_{n+1}).$ 
For $k\geq 2$,  $y_{k}$ is defined by recursively applying ${\rm bar}_n$ to $\hat{y}^{(i)}_{k-1}$. More precisely: 
$$y_{ik}:={\rm bar}_n(\hat{y}^{(i)}_{k-1})\text{ for }i\in\{1,...,n+1\}.$$
One can show, that each sequence $\left(y_{ik}\right)_{k\in \mathbb{N}}$ for $i=1, ..., n+1$ is a Cauchy-sequence and therefore convergent since $X$ is complete (see \cite[Section 1]{Navas}). Additionally, one can show that $\lim_{k\to \infty}y_{ik}=\lim_{k\to \infty}y_{jk}$ for all $i,j\in \{1,...,n+1\}$. Therefore we can define ${\rm bar}_{n+1}(x_1,...,x_{n+1}):=\lim_{k\to \infty}y_{ik}$ for some $i\in\{1,...,n+1\}$ and the limit is independent of the choice of $i$, this can also be found in \cite[Section 1]{Navas}. 

The fact that properties (b) and (c) are satisfied is immediate from the construction, checking property (a) is a lot more work and is done carefully in \cite[Section 1]{Navas}.

As an example, we can visualize the construction of bar$_4(x)$ with $x=(x_1,x_2,x_3,x_4)$, $x_i\in(\mathbb{R}^2,\ell^\infty)$ as follows.
\begin{center}
	\begin{tikzpicture}[scale=0.95]
		\coordinate[label=right: {$x_1$}] (A) at (5,0);
		\coordinate[label=above: {$x_2$}] (B) at (0,3);
		\coordinate[label=left: {$x_3$}] (C) at (-5,0);
		\coordinate[label=below: {$x_4$}] (D) at (0,-3);

		\fill (A) circle (2pt);
		\fill (B) circle (2pt);
		\fill (C) circle (2pt);
		\fill (D) circle (2pt);

		\fill[color=blue] (-2.2,0) circle (2pt);
		\draw[color=blue] (-2.2,0) -- (-2.2,0) node[left]{${\rm bar}_3(\hat{x}^{(1)})$};
		
		\fill[color=blue] (2.2,0) circle (2pt);
		\draw[color=blue] (2.2,0) -- (2.2,0) node[right]{${\rm bar}_3(\hat{x}^{(3)})$};
		
		\fill[color=blue] (0,1.2) circle (2pt);
		\draw[color=blue] (0,1.2) -- (0,1.2) node[above]{${\rm bar}_3(\hat{x}^{(4)})$};
		
		\fill[color=blue] (0,-1.2) circle (2pt);
		\draw[color=blue] (0,-1.2) -- (0,-1.2) node[below]{${\rm bar}_3(\hat{x}^{(2)})$};

		\fill[color=dgreen] (-1,0) circle (2pt);
		\draw[color=dgreen] (-1,0) -- (-1,0) node[above]{${\rm bar}_3(\hat{y}_1^{(1)})$};
		
		\fill[color=dgreen] (1,0) circle (2pt);
		\draw[color=dgreen] (1,0) -- (1,0) node[above]{${\rm bar}_3(\hat{y}_1^{(3)})$};
		
		\fill[color=dgreen] (0,0.5) circle (2pt);
		\draw[color=dgreen] (0,0.5) -- (0,0.5) node[above]{${\rm bar}_3(\hat{y}_1^{(4)})$};
		
		\fill[color=dgreen] (0,-0.5) circle (2pt);
		\draw[color=dgreen] (0,-0.5) -- (0,-0.5) node[below]{${\rm bar}_3(\hat{y}_1^{(2)})$};
		
		\fill[color=dgreen] (-0.1,0) circle (1pt);
		\fill[color=dgreen] (-0.22,0) circle (1pt);
		\fill[color=dgreen] (-0.4,0) circle (1pt);
		
		\fill[color=dgreen] (0.1,0) circle (1pt);
		\fill[color=dgreen] (0.22,0) circle (1pt);
		\fill[color=dgreen] (0.4,0) circle (1pt);
		
		\fill[color=dgreen] (0,0.1) circle (1pt);
		\fill[color=dgreen] (0,0.3) circle (1pt);
		\fill[color=dgreen] (0,0.18) circle (1pt);
		
		\fill[color=dgreen] (0,-0.1) circle (1pt);
		\fill[color=dgreen] (0,-0.3) circle (1pt);
		\fill[color=dgreen] (0,-0.18) circle (1pt);
		
		\fill[color=red] (0,0) circle (2pt);
		\draw[color=red] (0.9,-0.65)--(0.9,-0.65) node[above]{${\rm bar}_4(x)$};
	\end{tikzpicture}
\end{center}
For the second part let $\varphi\in{\rm Isom}(X)$ be an isometry and $n\in\mathbb{N}$. For $x\in X$ we have:
\begin{align*}
|\varphi| &\overset{\text{Def.}}{=}{\rm inf}\left\{d(y,\varphi(y)\mid y\in X\right\} \\
&\leq d({\rm bar}_n(x, \varphi(x), \varphi^2(x), \ldots, \varphi^{n-1}(x)), \varphi({\rm bar}_n(x, \varphi(x), \varphi^2(x), \ldots, \varphi^{n-1}(x)))) \\
&\overset{(1c)}{=} d( {\rm bar}_n(x, \varphi(x), \varphi^2(x), \ldots, \varphi^{n-1}(x))  ,{\rm bar}_n(\varphi(x), \varphi^2(x), \varphi^3(x), \ldots, \varphi^{n}(x))) \\
&\overset{(1b)}{=} d( {\rm bar}_n(x, \varphi(x), \varphi^2(x), \ldots, \varphi^{n-1}(x)), {\rm bar}_n(\varphi^n(x), \varphi(x), \varphi^{2}(x), \ldots, \varphi^{n-1}(x))) \\
&\overset{(1a)}{\leq} \frac{1}{n}\cdot( d(x, \varphi^n(x))+d(\varphi(x), \varphi(x))+\ldots+d(\varphi^{n-1}(x), \varphi^{n-1}(x))) \\
&=\frac{1}{n} d(x, \varphi^n(x))
\end{align*}
\end{proof}

We note that the statement of the following lemma follows from various results of \cite{bicombings}. For the sake of completeness we give the proof of it here following the ideas of \cite{bicombings}.

\begin{lemma}\label{divisible}
	Suppose $G$ is a metrically injective group and $H\leq G$ a subgroup. If 
	$H$ is almost divisible, then $H$ is a torsion group. In particular, a metrically injective group does not have $\mathbb{Q}$ nor $\mathbb{Z}_p$ as a subgroup.
\end{lemma}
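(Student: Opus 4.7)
The plan is to reduce the statement directly to Proposition \ref{toolQ} by verifying its hypothesis for metrically injective groups, using the barycenter estimate of Lemma \ref{bar}.

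First, since $G$ is metrically injective, by definition we have a geometric action $\Phi\colon G\to{\rm Isom}(X)$ on some injective metric space $X$. Restricting $\Phi$ to any subgroup $H\leq G$ still yields a geometric action of $H$ on $X$ in the sense that the ambient action is proper and $G$-cocompact, which is all we need for the semi-simple/positive-infimum conclusion of \cite[Thm. II.6.2.10]{BridsonHaefliger} that is invoked inside Proposition \ref{toolQ}. So it suffices to verify the translation length inequality $|\Phi(g)^n|\geq n\cdot|\Phi(g)|$ for every hyperbolic isometry $\Phi(g)\in\Phi(G)$ and every $n\in\mathbb{N}$.

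This is exactly what the second part of Lemma \ref{bar} gives, essentially for free. For any isometry $\varphi\in{\rm Isom}(X)$ and any $n\in\mathbb{N}$, Lemma \ref{bar} (2) states that for every $x\in X$,
\[
|\varphi|\leq \tfrac{1}{n}\,d(x,\varphi^n(x)).
\]
Taking the infimum over $x\in X$ on the right and using the definition of translation length, this rearranges to
\[
|\varphi^n| \;=\; \inf_{x\in X} d(x,\varphi^n(x)) \;\geq\; n\cdot|\varphi|.
\]
Applying this with $\varphi=\Phi(g)$ yields the hypothesis of Proposition \ref{toolQ}. That proposition then immediately delivers the main statement: any almost divisible subgroup $H\leq G$ is a torsion group.

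For the ``in particular'' part, recall that both $\mathbb{Q}$ and $\mathbb{Z}_p$ are torsion-free and almost divisible (for $\mathbb{Q}$ this is obvious; for $\mathbb{Z}_p$ we use that it is $q$-divisible for every prime $q\neq p$, so one can choose an unbounded sequence $(n_i)$ of such integers). If either group embedded into $G$, the embedded copy would be an almost divisible subgroup of $G$ containing a non-trivial element of infinite order, contradicting the first assertion. No real obstacle is expected here: the entire argument is bookkeeping, once one notices that the inequality needed to feed Proposition \ref{toolQ} is precisely the barycenter estimate already established in Lemma \ref{bar}(2).
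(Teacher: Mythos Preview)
Your proof is correct and follows essentially the same route as the paper: verify the inequality $|\Phi(g)^n|\geq n\cdot|\Phi(g)|$ using Lemma~\ref{bar}(2) and then invoke Proposition~\ref{toolQ}. The only cosmetic difference is that the paper evaluates the inequality at a single point $x\in\mathrm{Min}(\Phi(g)^n)$ (which exists since the isometries are semi-simple) to get $d(x,\Phi(g)^n(x))=|\Phi(g)^n|$ directly, whereas you take the infimum over all $x$; both work. Your aside about restricting $\Phi$ to $H$ is unnecessary, since Proposition~\ref{toolQ} is already stated for subgroups $H\leq G$ under the ambient $G$-action, but this does not affect correctness.
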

\begin{proof}
	Let $\Phi\colon G\to {\rm Isom}(X)$ be a geometric action of $G$ on an injective metric space $X$. Let $\Phi(g)$ be a hyperbolic isometry and $n\in\mathbb{N}$. By Lemma \ref{bar}
for $x\in{\rm Min}(\Phi(g)^n)$ the inequality 
$$|\Phi(g)| \leq\frac{1}{n} d(x, \Phi(g)^n(x))=\frac{1}{n}|\Phi(g)^n|$$
holds, hence $n\cdot |\Phi(g)| \leq |\Phi(g)^n|$. It follows by Proposition \ref{toolQ} that any almost divisible subgroup of $G$ is a torsion group. Since both $\mathbb{Q}$ and the 
	$p$-adic integers $\mathbb{Z}_p$ are (almost) divisible and torsion-free, the group $G$ has neither $\mathbb{Q}$ nor $\mathbb{Z}_p$ as a subgroup.
\end{proof}

We want to point out that an injective metric space is contractible \cite[Chap. 2]{Bowditch} and it is geodesic \cite[\S 2]{Lang}. Thus, a metrically injective group is finitely presented due to \cite[Cor. I.8.11]{BridsonHaefliger}. Therefore it cannot contain $\mathbb{Z}_p$, since this group is uncountable. However, as we noted in Remark \ref{finEmb}, this is not enough on its own to prevent the existence of subgroups isomorphic to $\mathbb{Q}$.

\begin{lemma}\label{torsion}
The order of elements in torsion subgroups of a metrically injective group is bounded. In particular, a metrically injective group does not include the Pr\"ufer $p$-group $\mathbb{Z}(p^\infty)$ for any prime $p$ as a subgroup.
\end{lemma}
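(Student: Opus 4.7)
The plan is to combine two ingredients: a fixed-point property for finite subgroups acting on injective metric spaces, and a counting argument from the properness and cocompactness of the action. Fix a geometric action $\Phi\colon G\to {\rm Isom}(X)$ on an injective metric space $X$.

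First I would show that every finite subgroup $F\leq G$ has a fixed point in $X$, using the barycenter maps from Lemma \ref{bar}. Pick any $x\in X$, enumerate $F=\{f_1,\dots,f_{|F|}\}$, and set
\[
y:={\rm bar}_{|F|}\bigl(\Phi(f_1)(x),\dots,\Phi(f_{|F|})(x)\bigr).
\]
For any $f\in F$, left multiplication by $f$ merely permutes $F$, so applying property (1c) of Lemma \ref{bar} followed by property (1b) yields
\[
\Phi(f)(y)={\rm bar}_{|F|}\bigl(\Phi(ff_1)(x),\dots,\Phi(ff_{|F|})(x)\bigr)=y.
\]
Thus $F$ stabilizes $y$.

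Second I would extract a uniform bound on $|F|$. Choose a compact $K\subseteq X$ with $\Phi(G)(K)=X$. By properness, for each $k\in K$ there is some $r_k>0$ such that the set $S_k:=\{g\in G\mid \Phi(g)(B_{r_k}(k))\cap B_{r_k}(k)\neq\emptyset\}$ is finite; compactness of $K$ yields a finite subcover $B_{r_{k_1}}(k_1),\dots,B_{r_{k_m}}(k_m)$, and we set $N:=\max_{i}|S_{k_i}|$. Given a finite $F\leq G$ with fixed point $y$, pick $g\in G$ with $\Phi(g^{-1})(y)\in B_{r_{k_i}}(k_i)$ for some $i$. The conjugate $g^{-1}Fg$ fixes $\Phi(g^{-1})(y)$, so each of its elements lies in $S_{k_i}$, forcing $|F|=|g^{-1}Fg|\leq N$.

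Any torsion element of $G$ generates a finite cyclic subgroup, whose order is then at most $N$; this uniform bound depends only on $G$ and the chosen action, not on the ambient torsion subgroup, which proves the first assertion. Since $\mathbb{Z}(p^\infty)$ contains elements of arbitrarily large order, it cannot embed in $G$. The main obstacle is really the first step: producing a canonical fixed point for finite subgroups requires the barycenter construction of Lemma \ref{bar}, after which the uniform bound is a standard properness-plus-cocompactness packing argument.
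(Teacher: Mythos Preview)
Your argument is correct and follows essentially the same two-step strategy as the paper: first show that every finite subgroup of $G$ fixes a point in $X$, then use properness plus cocompactness to bound the size of point stabilizers uniformly. The only difference is cosmetic: where the paper cites \cite[Prop.~1.2]{Lang} for the fixed-point property and \cite[Prop.~I.8.5]{BridsonHaefliger} for the finiteness of conjugacy classes of isotropy subgroups, you unpack both references explicitly, deriving the fixed point directly from the equivariance and permutation-invariance of the barycenter maps of Lemma~\ref{bar} and spelling out the standard covering argument for the bound.
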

\begin{proof}
We show that $G$ has finitely many conjugacy classes of finite subgroups. 
We first use \cite[Prop. 1.2]{Lang} to see that any finite subgroup of $G$ fixes a point. Then we can use \cite[Prop I.8.5]{BridsonHaefliger} and obtain that there are only finitely many conjugacy classes of isotropy subgroups, these are all finite due to the properness of the action. Since every torsion element is mapped into such an isotropy subgroup, its order then needs to be bounded, since the kernel is also finite due to properness.

Since the order of elements in the Pr\"ufer $p$-group $\mathbb{Z}(p^\infty)$ is unbounded, a metrically injective group can not have this group as a subgroup.
\end{proof}

Helly graphs are discrete versions of injective metric spaces. More precisely,  a connected graph is called \textit{Helly} if any family of pairwise intersecting combinatorial balls has a non-empty global intersection. A group $G$ is called \textit{Helly} if it acts geometrically by simplicial isometries on a Helly graph.  For example all Gromov-hyperbolic groups are Helly as well as groups acting geometrically on a ${\rm CAT}(0)$ cube complex, see \cite[Prop. 6.1, Cor. 6.8]{Helly}.

 Since a Helly group is also metrically injective \cite[Thm. 1.5]{Helly}, we obtain
 \begin{corollary}
  \label{Helly}
Any Helly group does not include $\mathbb{Q}$ or $\mathbb{Z}_p$ or $\mathbb{Z}(p^ \infty)$ for any prime $p$ as a subgroup.
 \end{corollary}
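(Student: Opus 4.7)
The plan is to invoke the cited result \cite[Thm. 1.5]{Helly} that every Helly group is metrically injective, and then reduce the statement to Lemma \ref{divisible} and Lemma \ref{torsion}, which have just been proved for metrically injective groups. Thus, there is essentially no new content: the corollary is a pure specialization of the injective-metric-space case.

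More concretely, let $G$ be a Helly group, so by \cite[Thm. 1.5]{Helly} there exists a geometric action of $G$ on some injective metric space. First I would observe that $\mathbb{Q}$ and $\mathbb{Z}_p$ are almost divisible and torsion-free, so Lemma \ref{divisible} rules out the existence of such a subgroup inside $G$ (if such a subgroup existed, it would have to be a torsion group by the lemma, contradicting torsion-freeness). Next, I would observe that the Pr\"ufer $p$-group $\mathbb{Z}(p^\infty)$ contains elements of arbitrarily large order; since Lemma \ref{torsion} says that the orders of torsion elements in a metrically injective group are uniformly bounded, $\mathbb{Z}(p^\infty)$ cannot embed into $G$.

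There is no significant obstacle: the work has all been done in Lemma \ref{divisible} and Lemma \ref{torsion}, and the only input specific to Helly groups is the embedding of the class of Helly groups into the class of metrically injective groups. In particular, one does not need to reprove any translation-length inequality using the Helly graph structure directly, since the injective metric space produced by \cite[Thm. 1.5]{Helly} already supplies the bicombing used in Lemma \ref{bar}.
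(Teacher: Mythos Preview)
Your proposal is correct and mirrors the paper's argument exactly: the paper simply notes that a Helly group is metrically injective by \cite[Thm.~1.5]{Helly} and then the corollary follows immediately from Lemma~\ref{divisible} and Lemma~\ref{torsion}.
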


\subsection{The class $\mathcal{G}$ and graph products}
In this subsection we show that the class $\mathcal{G}$ is closed under taking extensions and graph products. 

The following lemma shows that the property of a group to have only artinian torsion subgroups is inherited by taking extensions.
\begin{lemma}\label{artinian}
	Let $\left\{1\right\}\to A\overset{\iota}{\to} B\overset{\pi}{\to} C\to\left\{1\right\}$ be a short exact sequence of groups. If torsion subgroups of $A$ and $C$ are artinian, then torsion subgroups of $B$ are also artinian. In particular, if $S,T$ are two groups whose torsion subgroups are artinian, then the torsion subgroups of $S\times T$ are also artinian.
\end{lemma}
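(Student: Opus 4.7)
The plan is to reduce the claim to the general fact that the descending chain condition (DCC) on subgroups is preserved under extensions, since the paper's definition of artinian is precisely this DCC property. So the first move is to take an arbitrary torsion subgroup $T \subseteq B$ and realize it as an extension of torsion subgroups of $A$ and $C$.

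Concretely, I would set $N := T \cap \iota(A)$, which is a torsion subgroup of $\iota(A) \cong A$ and is therefore artinian by hypothesis. Its image $\pi(T)$ is a torsion subgroup of $C$, hence artinian as well. Restricting the given short exact sequence produces
\[
\{1\} \to N \to T \to \pi(T) \to \{1\}.
\]
So it suffices to prove the lemma under the reduction: if $G$ has a normal subgroup $N$ with both $N$ and $G/N$ artinian, then $G$ is artinian.

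For this reduction I would argue directly with descending chains. Given any chain $G_0 \supseteq G_1 \supseteq \ldots$ of subgroups of $G$, the induced chains $(G_i \cap N)_i$ in $N$ and $(G_i N / N)_i$ in $G/N$ both stabilize by assumption, so we may pick $n$ such that $G_i \cap N = G_n \cap N$ and $G_i N = G_n N$ for every $i \geq n$. Now for $g \in G_n$ and $i \geq n$, write $g = g' h$ with $g' \in G_i$ and $h \in N$; then $h = (g')^{-1} g \in G_n \cap N = G_i \cap N \subseteq G_i$, so $g = g' h \in G_i$. This yields $G_n = G_i$ for all $i \geq n$, so the chain in $G$ stabilizes, proving that $G$ is artinian.

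The \emph{In particular} statement then follows immediately by applying the main assertion to the split short exact sequence $\{1\} \to S \to S \times T \to T \to \{1\}$. The only slightly delicate point is the chain-stabilization argument above, but this is a standard Zassenhaus-style manipulation and presents no real obstacle; everything else is a direct application of the hypotheses.
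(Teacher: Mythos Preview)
Your proof is correct and follows essentially the same route as the paper's: both take an arbitrary torsion subgroup of $B$, sandwich it in a short exact sequence between torsion subgroups of $A$ and $C$, and then invoke the fact that the artinian property is closed under extensions. The only difference is that the paper cites \cite[Thm.~7.3]{Olshanskii} for that closure fact, whereas you supply a direct (and correct) descending-chain argument, making your version self-contained.
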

\begin{proof}
For a torsion subgroup $H$ in $B$ we have a short exact sequence of torsion groups
$$\left\{1\right\}\rightarrow H\cap \iota(A)\rightarrow H\rightarrow \pi(H)\rightarrow\left\{1\right\}.$$
By assumption the torsion groups $H\cap\iota(A)$ and $\pi(H)$ are artinian. Further, it is known that being artinian is preserved by taking extensions \cite[Thm. 7.3]{Olshanskii}. Hence the group $H$ is artinian. 

For the in particular statement, we consider the sequence
$$\left\{1\right\}\to S\to S\times T\to (S\times T)/S\to\left\{1\right\}$$ 
that is exact and the torsion subgroups of $S$ and $(S\times T)/S\cong T$ are artinian. Thus the torsion subgroups of $S\times T$ are also artinian by the previous statement.
\end{proof}

\begin{proposition}\label{Extensions}
The class $\mathcal{G}$ is closed by taking extensions.
\end{proposition}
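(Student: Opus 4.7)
Let $1 \to A \overset{\iota}{\to} B \overset{\pi}{\to} C \to 1$ be a short exact sequence of groups with $A, C \in \mathcal{G}$. The plan is to verify the three defining properties of $\mathcal{G}$ for $B$ one at a time. Property (iii) is already immediate from Lemma \ref{artinian}, so the real work is to rule out copies of $\mathbb{Q}$, $\mathbb{Z}(p^\infty)$ and $\mathbb{Z}_p$ inside $B$.

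For properties (i) and (ii), I would argue by contradiction: suppose $P \leq B$ is a subgroup isomorphic to one of the forbidden groups. The guiding idea is that either $\pi(P)$ must be a forbidden subgroup of $C$, or else $P$ (or a large enough piece of it) must sit inside $\iota(A) \cong A$ and thereby violate $A \in \mathcal{G}$.

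When $P \cong \mathbb{Q}$ or $P \cong \mathbb{Z}(p^\infty)$, the image $\pi(P)$ is divisible as a quotient of a divisible group. Applying Corollary \ref{Rtrivial} to $\pi|_P$ (using that $C$ contains neither $\mathbb{Q}$ nor any Pr\"ufer group) forces $\pi(P)$ to be trivial. Hence $P \leq \iota(A)$, contradicting $A \in \mathcal{G}$.

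The main obstacle is the case $P \cong \mathbb{Z}_p$, because $\mathbb{Z}_p$ is not divisible and its image in $C$ can be nontrivial. Here I would set $H := P \cap \iota(A)$ and analyse the two possibilities. If $H$ is trivial then $\pi$ embeds $P \cong \mathbb{Z}_p$ into $C$, contradicting $C \in \mathcal{G}$. Otherwise Lemma \ref{QuotientZp} yields $P/H \cong T \oplus D$ with $T$ finite cyclic of $p$-power order and $D$ abelian divisible. Since $D$ embeds into $C$, the Structure Theorem for abelian divisible groups together with properties (i) and (ii) for $C$ forces $D$ to be trivial. Therefore $P/H$ is finite, so $H$ has finite index in $P \cong \mathbb{Z}_p$, and by \cite[p. 18]{Fuchs} this forces $H \cong \mathbb{Z}_p$. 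But then $A$ contains a copy of $\mathbb{Z}_p$, contradicting $A \in \mathcal{G}$. This exhausts all cases and establishes $B \in \mathcal{G}$.
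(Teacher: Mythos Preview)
Your proof is correct and follows essentially the same approach as the paper's: contradiction via the divisibility of $\mathbb{Q}$ and $\mathbb{Z}(p^\infty)$ to push these groups into $A$, and Lemma~\ref{QuotientZp} to handle $\mathbb{Z}_p$, with Lemma~\ref{artinian} supplying property~(iii). The only cosmetic differences are that you invoke Corollary~\ref{Rtrivial} where the paper cites the Structure Theorem directly, and in the $\mathbb{Z}_p$ case you conclude $H\cong\mathbb{Z}_p$ algebraically via \cite[p.~18]{Fuchs} (as in the proof of Corollary~A), whereas the paper argues topologically that $\ker(\pi|_{\mathbb{Z}_p})$ is open of $p$-power index, hence closed, hence isomorphic to $\mathbb{Z}_p$.
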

\begin{proof}
Let $\left\{1\right\}\rightarrow A\overset{\iota}{\rightarrow} B\overset{\pi}{\rightarrow} C\rightarrow \left\{1\right\}$ be an exact sequence of groups where $A$ and $C$ are in the class $\mathcal{G}$. Our goal is to show that $B$ is also in the class $\mathcal{G}$.

Suppose that $\mathbb{Q}$ is a subgroup of $B$. Since $\mathbb{Q}$ is divisible, the image of $\mathbb{Q}$ under $\pi$ is also an abelian divisible group. By assumption the group $C$ is contained in the class $\mathcal{G}$ and therefore any abelian divisible subgroup of $C$ is trivial by Theorem \ref{structureTheorem}. Hence $\mathbb{Q}\subseteq \ker(\pi)=\iota(A)\cong A$, this contradicts the fact that the group $A$ is in the class $\mathcal{G}$. The same arguments hold for the group $\mathbb{Z}(p^\infty$), since this group is also divisible.

Suppose now that $\mathbb{Z}_p$ is a subgroup of $B$. The kernel of $\pi_{\mid\mathbb{Z}_p}\colon\mathbb{Z}_p\to C$ is non-trivial since the group $C$ is in the class $\mathcal{G}$. Thus, by Lemma \ref{QuotientZp} we know that $\pi(\mathbb{Z}_p)\cong T\times D$ where $T$ is a finite cyclic group of $p$-power order and $D$ is an abelian divisible group. By assumption the group $C$ is in the class $\mathcal{G}$ and therefore $D$ has to be trivial. Thus $\ker(\pi_{\mid\mathbb{Z}_p})\subseteq \ker(\pi)=\iota(A)\cong A$ is an open subgroup of $p$-power index of $\mathbb{Z}_p$ by \cite[Thm. 5.2]{Klopsch}. It is therefore also a closed subgroup and thus is isomorphic to $\mathbb{Z}_p$ by \cite[Prop. 2.7(b)]{Profinite}, which is a contradiction to the assumption that $A$ is in the class $\mathcal{G}$.

By assumption the torsion subgroups of $A$ and $C$ are artinian, thus by Lemma \ref{artinian} the torsion subgroups of $B$ are artinian too. Hence the group $B$ is in the class $\mathcal{G}$.
\end{proof}

Now we show that the class $\mathcal{G}$ is closed under taking graph products. We want to do this with geometric means. This is inspired by \cite{KramerVarghese}.

\begin{definition}
Given a finite simplicial graph $\Gamma=(V, E)$ and a collection of groups $\mathcal{G} = \{ G_u \mid u \in V\}$, the \emph{graph product} $G_\Gamma$ is defined as the quotient
$$({\ast}_{u\in V} G_u) / \langle \langle [G_v,G_w]\text{ for }\{v,w\}\in E \rangle \rangle.$$
\end{definition}
	Given a graph product $G_\Gamma$, there exists a finite dimensional right-angled building $X_\Gamma$ on which $G_\Gamma$ acts isometrically \cite[Thm. 5.1]{DavisBuildings}, see also \cite[Section 3.5]{KramerVarghese}. \begin{proposition}\label{Building}
 If a subgroup $H\leq G_\Gamma$ of a graph product $G_\Gamma$ acts locally elliptically on the associated building $X_\Gamma$, i.e. each element has a fixed point, then $H$ has a global fixed point and $H$ is contained in a point-stabilizer, which has the form $gG_\Delta g^{-1}$ for a maximal clique $\Delta$ in $\Gamma$ and a $g\in G_\Gamma$.
\end{proposition}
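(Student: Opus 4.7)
The plan is to combine the geometric structure of the right-angled building $X_\Gamma$ with a fixed point theorem for locally elliptic actions on finite-dimensional CAT(0) spaces, and then identify point stabilizers with the subgroups of the form $gG_\Delta g^{-1}$.

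First I would recall the features of $X_\Gamma$ that are needed: since $\Gamma$ is finite, the Davis realization of the right-angled building associated to $G_\Gamma$ is a complete CAT(0) cube complex whose dimension equals the clique number of $\Gamma$, and $G_\Gamma$ acts on $X_\Gamma$ by cellular isometries (see \cite[Thm. 5.1]{DavisBuildings} and \cite[Sec. 3.5]{KramerVarghese}). Crucially, the cubes of $X_\Gamma$ are indexed by cosets of the subgroups $G_\Delta$ for cliques $\Delta\subseteq\Gamma$, and the pointwise stabilizer of a cube of type $\Delta$ based at $gG_\Delta$ is exactly $gG_\Delta g^{-1}$. Hence every point $x\in X_\Gamma$ lies in the interior of a unique cube of some type $\Delta$, and its stabilizer in $G_\Gamma$ is a conjugate of $G_\Delta$ for that clique.

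Next, I would invoke a fixed point theorem to produce a global fixed point for $H$. Since $X_\Gamma$ is a complete CAT(0) cube complex of finite dimension and $H$ acts locally elliptically, every element of $H$ fixes some point; by the standard fixed point theorem for locally elliptic actions on finite-dimensional CAT(0) cube complexes (Sageev, and in greater generality Caprace's result for complete CAT(0) spaces of finite telescopic dimension), this forces a global fixed point $x\in X_\Gamma$. Choose the cube containing $x$ in its interior, let $\Delta$ be its type, and let $g\in G_\Gamma$ be such that this cube is based at $gG_\Delta$. Then $H\subseteq{\rm Stab}_{G_\Gamma}(x)=gG_\Delta g^{-1}$, and by enlarging $\Delta$ to a maximal clique containing it (which only enlarges the stabilizer) we may assume $\Delta$ is maximal.

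The main obstacle will be the fixed point step: local ellipticity does not in general guarantee a global fixed point on an arbitrary CAT(0) space, and even the classical Bruhat--Tits/Cartan theorem only yields fixed points from bounded orbits. The key input is therefore the finite dimensionality of $X_\Gamma$ (automatic here because $\Gamma$ is finite), which must be used to pass from ``every element has a fixed point'' to ``the whole group has a fixed point''. Once this step is in hand, identifying the stabilizer of that point as a conjugate of some $G_\Delta$ is a direct reading off of the combinatorial structure of the building, and the proposition follows.
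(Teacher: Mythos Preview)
Your proposal is correct and follows essentially the same route as the argument the paper defers to: the paper does not prove this proposition itself but simply cites \cite[Section 3.5]{KramerVarghese} and \cite[Lemma 3.6]{KramerVarghese}, where precisely the ingredients you outline---the finite-dimensional CAT(0) cube complex structure of $X_\Gamma$, a fixed point theorem for locally elliptic actions in that setting, and the identification of point stabilizers as conjugates of clique subgroups $G_\Delta$---are assembled. Your sketch is a faithful reconstruction of that argument, including the correct identification of the finite-dimensionality of $X_\Gamma$ as the crucial hypothesis making the passage from local to global ellipticity work.
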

\begin{proof}
	This can be found in \cite[Section 3.5]{KramerVarghese} and \cite[Lemma 3.6]{KramerVarghese}.
\end{proof}

We are now ready to prove our final proposition.
\begin{proposition}\label{ClassG}
	The class $\mathcal{G}$ is closed under taking graph products.
\end{proposition}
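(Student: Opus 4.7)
The plan is to verify each of the three defining conditions of $\mathcal{G}$ for $G_\Gamma$, using the geometric input of Proposition \ref{Building} applied to the right-angled building $X_\Gamma$ to reduce everything to the standard clique subgroups $G_\Delta$, and then invoking Proposition \ref{Extensions}.

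First I would handle the clique case. For any clique $\Delta \subseteq \Gamma$, the standard subgroup $G_\Delta$ is the direct product $\prod_{v \in \Delta} G_v$ of finitely many groups in $\mathcal{G}$; iterated application of Proposition \ref{Extensions} to the split short exact sequences $\{1\} \to G_v \to G_v \times G_{\Delta \setminus \{v\}} \to G_{\Delta \setminus \{v\}} \to \{1\}$ gives $G_\Delta \in \mathcal{G}$, and hence every conjugate $g G_\Delta g^{-1}$ lies in $\mathcal{G}$ as well.

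The heart of the argument is to show that any ``forbidden'' subgroup $H \leq G_\Gamma$ must act locally elliptically on $X_\Gamma$, so that Proposition \ref{Building} places $H$ inside some $g G_\Delta g^{-1} \in \mathcal{G}$, contradicting the assumed structure of $H$. For torsion subgroups and for $H \cong \mathbb{Z}(p^\infty)$ this is immediate: every element has finite order, generates a finite and hence bounded orbit in the complete CAT$(0)$ space $X_\Gamma$, and Cartan's fixed-point theorem supplies a fixed point. For $H \cong \mathbb{Q}$, given $h \in H$ and any $n \in \mathbb{N}$, choose a root $k_n \in H$ with $k_n^n = h$; if $h$ acted hyperbolically then each $k_n$ would also be hyperbolic (an elliptic $k_n$ would fix a point that $h = k_n^n$ must also fix), and the semi-simple translation-length identity $|k_n^n| = n |k_n|$ on the CAT$(0)$ building would force $|k_n| = |h|/n \to 0$, contradicting the positive infimum of translation lengths of hyperbolic elements of $G_\Gamma$ on $X_\Gamma$. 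For $H \cong \mathbb{Z}_p$ the same argument applies using that every element is $n$-divisible for every $n$ coprime to $p$, which already yields an unbounded sequence of divisors.

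Assembling these observations, conditions (i) and (ii) are immediate, since no $\mathbb{Q}$, $\mathbb{Z}_p$, or $\mathbb{Z}(p^\infty)$ could sit inside $G_\Gamma$ without contradicting $G_\Delta \in \mathcal{G}$; for (iii), any torsion subgroup $T \leq G_\Gamma$ must likewise lie in some $g G_\Delta g^{-1} \in \mathcal{G}$ and is therefore artinian. The main obstacle I expect is the divisibility-to-ellipticity step for $\mathbb{Q}$ and $\mathbb{Z}_p$, which rests on two geometric facts about the $G_\Gamma$-action on $X_\Gamma$: semi-simplicity of its hyperbolic elements, and a uniform positive lower bound on their translation lengths. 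Both should be available from the structure of the Davis realization of a finite-dimensional right-angled building, as used in \cite{KramerVarghese}, after which the rest is a clean combination of Propositions \ref{Building} and \ref{Extensions}.
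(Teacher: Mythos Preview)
Your proposal is correct and follows essentially the same route as the paper: use semi-simplicity and the positive infimum of hyperbolic translation lengths on the CAT$(0)$ building $X_\Gamma$ to force any (almost) divisible or torsion subgroup to act locally elliptically, then apply Proposition~\ref{Building} to land inside a conjugate of some $G_\Delta$. The only organisational difference is that you first invoke Proposition~\ref{Extensions} to conclude $G_\Delta\in\mathcal{G}$ and then finish by contradiction, whereas the paper argues directly with the projections $\pi_i\colon G_\Delta\to G_{v_i}$ together with the Structure Theorem and Lemma~\ref{QuotientZp}; your packaging is slightly cleaner but not a different idea. Two small points worth tightening: what you need is that \emph{all} elements of $G_\Gamma$ act semi-simply (not just the hyperbolic ones), and the precise references for this and for the positive infimum are \cite{DavisBuildings} (the building is a finite-dimensional CAT$(0)$ cube complex) and \cite{Bridson}, which is what the paper cites.
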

In particular any finite direct or free product of the groups named in Corollary C provides another example to which Theorem B can be applied.
\begin{proof}
	Given a  graph product $G_\Gamma$, let $X_\Gamma$ denote the associated finite dimensional right-angled building. Such a building is a CAT$(0)$ cube complex \cite[Thm. 5.1, Thm. 11.1]{DavisBuildings} and therefore we can apply  \cite{Bridson} to see that every isometry of $X_\Gamma$ is semi-simple and that the infimum of translation lengths of hyperbolic isometries is positive.
	
	So now suppose $\mathbb{Q}$ is not a subgroup of all the vertex groups $G_v$ but a subgroup of $G_\Gamma$. Then $\mathbb{Q}$ acts on $X_\Gamma$ via semi-simple isometries. Since $X_\Gamma$ is CAT$(0)$ we can apply \cite[Thm 2.5, Claim 7]{Caprace Marquis} to see that the action of $\mathbb{Q}$ has to be locally elliptic (since the infimum of translation lengths of hyperbolic isometries is positive). Therefore we can apply Proposition \ref{Building} to see that $\mathbb{Q}$ is contained in a vertex stabilizer. So there exists a complete subgraph $\Delta$ such that $\mathbb{Q}\subseteq G_\Delta=G_{v_1}\times G_{v_2}\times...\times G_{v_n}$ for some $n\in\mathbb{N}$, because stabilizers are conjugates of $G_\Delta$ and conjugating is an isomorphism of groups. We can then obtain maps $\pi_i\colon \mathbb{Q}\to G_{v_i}$ for $i\in\{1,...,n\}$ by taking quotients, $\pi_i$ is the canonical quotient map $G_\Delta\to G_\Delta\big/\left(G_{v_1}\times...\times G_{v_{i-1}}\times G_{v_{i+1}}\times...\times G_{v_n}\right)\cong G_{v_i}$. Each $\pi_i(\mathbb{Q})$ needs to be an abelian divisible subgroup of $G_i$ and hence needs to be trivial by Theorem \ref{structureTheorem}. This cannot be the case however, so the assumption that $G_\Gamma$ contains $\mathbb{Q}$ was wrong.\\
	The same argument holds for the $p$-Pr\"ufer groups.\\
	For $\mathbb{Z}_p$ we apply a similar argument to conclude that $\mathbb{Z}_p\subseteq G_\Delta=G_{v_1}\times G_{v_2}\times...\times G_{v_n}$ for some $n\in\mathbb{N}$. This is possible, since $\mathbb{Z}_p$ is $q$-divisible for every prime $q\neq p$. We again obtain maps $\pi_i\colon \mathbb{Z}_p\to G_{v_i}$ for $i\in\{1,...,n\}$. Since no $G_i$ contains $\mathbb{Z}_p$ each $\pi_i(\mathbb{Z}_p)$ needs to be a proper quotient of $\mathbb{Z}_p$, thus there are finite groups $T_i$ and abelian divisible groups $A_i$ with $\pi_i(\mathbb{Z}_p)\cong T_i\times A_i$ by Lemma \ref{QuotientZp}. But as above, $A_i$ has to be trivial. But this is a contradiction, since $\mathbb{Z}_p$ is not finite.
	
	Finally to show that torsion subgroups of $G_\Gamma$ are artinian we reduce to the direct product case as follows: Given a torsion subgroup $H$ of $G_\Gamma$, we know it needs to act locally elliptically on $G_\Gamma$, since every element has finite order. Thus due to Proposition \ref{Building} the torsion group $H$ is contained in $ g (G_{v_1}\times ... \times G_{v_k}) g^{-1}$ for some vertex groups $G_{v_i}$ for some $k\in \mathbb{N}$, $1\leq i\leq k$ and some $g\in G$. So $ G_{v_1}\times ... \times G_{v_k}$ contains a subgroup isomorphic to $H$. Since all the torsion subgroups in the vertex groups are artinian, we can apply Lemma \ref{artinian} to see that the torsion subgroups of $G_{v_1}\times ... \times G_{v_k}$ are artinian too. Therefore $H$ is artinian too, which is what we wanted to show.
\end{proof}

\end{document}